\title[Wiring switches to light bulbs]{Wiring switches to 
  more light bulbs}
\newcommand\ignore[1]{}
                       \def\la{\lambda}
\def\K{\overline{K}}
\def\M{\mathcal{M}}
\def\N{\mathbb{N}}              \def\Z{\mathbb{Z}}
\def\F{\mathbb{F}}
\def\({\left(}           
\def\){\right)}          
\def\lce{\left\lceil}    \def\lfl{\left\lfloor}
\def\rce{\right\rceil}   \def\rfl{\right\rfloor}
\def\deg{\operatorname{deg}}  \def\diag{\operatorname{diag}}
\def\ds{\displaystyle}
\numberwithin{equation}{subsection}
\theoremstyle{plain}
\newtheorem{thm}{Theorem}[section]    \newtheorem{prop}[thm]{Proposition}
\newtheorem{lem}[thm]{Lemma}      \newtheorem{cor}[thm]{Corollary}
 \newtheorem{que}[thm]{Question}
\theoremstyle{remark}
\theoremstyle{plain}
\newtheorem{Thm}{Theorem}
\newtheorem{Lem}[Thm]{Lemma}            
\newtheorem{Cor}[Thm]{Corollary}        %
\author{Stephen M. Buckley} 
\author{Anthony G. O'Farrell}
\address{Department of Mathematics, National University of Ireland Maynooth,
Maynooth, Co.~Kildare, Ireland}
\email{stephen.m.buckley@mu.ie, anthony.ofarrell@mu.ie}
\date{\today:\currenttime}
\thanks{The first author was partly supported by Science Foundation Ireland.
Both authors were partly supported by the European Science Foundation
Networking Programme HCAA}
\keywords{wiring, switching, MAX-XOR-SAT, Hamming distance, Hadamard matrix}
\subjclass[2020]{Primary: 05D99. Secondary: 11B39, 68R05, 94C10}
\def\rf#1{\@rf{#1}#1:;;}
\def\rfs#1{\@rfs{#1}#1:;;}
\def\rfm#1{\@rfF#1<>;;}
\def\@C{C}\def\@E{E}\def\@F{F}\def\@L{L}\def\@O{O}\def\@P{P} 
\def\@Q{Q}\def\@R{R}\def\@S{S}\def\@T{T}\def\@X{X}\def\@s{s} 
\def\@rf#1#2:#3;;{\xdef\@b{#2}
  \ifx\@b\@C Corollary~\ref{#1}\else%
  \ifx\@b\@E (\ref{#1})\else
  \ifx\@b\@F Fact~\ref{#1}\else%
  \ifx\@b\@L Lemma~\ref{#1}\else%
  \ifx\@b\@O Observation~\ref{#1}\else%
  \ifx\@b\@P Proposition~\ref{#1}\else%
  \ifx\@b\@Q Question~\ref{#1}\else%
  \ifx\@b\@R Remark~\ref{#1}\else%
  \ifx\@b\@S Section~\ref{#1}\else%
  \ifx\@b\@T Theorem~\ref{#1}\else%
  \ifx\@b\@X Example~\ref{#1}\else%
  \ifx\@b\@s \S\ref{#1}\else
  \ref{#1}\fi\fi\fi\fi\fi\fi\fi\fi\fi\fi\fi\fi}
\def\@rfs#1#2:#3;;{\def\@b{#2}
  \ifx\@b\@C Corollaries~\ref{#1}\else%
  \ifx\@b\@F Facts~\ref{#1}\else%
  \ifx\@b\@L Lemmas~\ref{#1}\else%
  \ifx\@b\@O Observations~\ref{#1}\else%
  \ifx\@b\@P Propositions~\ref{#1}\else%
  \ifx\@b\@Q Questions~\ref{#1}\else%
  \ifx\@b\@R Remarks~\ref{#1}\else%
  \ifx\@b\@S Sections~\ref{#1}\else%
  \ifx\@b\@T Theorems~\ref{#1}\else%
  \ifx\@b\@X Examples~\ref{#1}\else%
  \ifx\@b\@D Definitions~\ref{#1}\else
  \ref{#1}\fi\fi\fi\fi\fi\fi\fi\fi\fi\fi\fi}
\def\@rfF<#1>#2;;{\def\@c{#2}
  \@rfs{#1}#1:;;\ifx\@c\empty\else\@rfL:#2;;\fi}
\def\@rfL:#1<#2>#3;;{\def\@b{#2}\def\@c{#3}
  #1\ifx\@b\empty\else\ref{#2}\ifx\@c\empty\else\@rfL:#3;;\fi\fi}
\begin{document}

\begin{abstract}
Given $n$ buttons and $n$ bulbs so that the $i$th button toggles the $i$th
bulb and perhaps some other bulbs, we compute the sharp lower bound on the
number of bulbs that can be lit regardless of the action of the buttons.
In the previous article we dealt with the case where each button affects 
at most 2 or 3 bulbs. In the present article we give sharp lower
bounds for up to 4 or 5 wires per switch, and we show that
the sharp asymptotic bound
for an arbitrary number of wires is $\frac12$.
(Even if you've found their buttons, 
you can please no more than half the people all the time!)
\end{abstract}

\maketitle

\section{Introduction}\label{S:introduction}

\subsection{The function $\mu(m,n)$}
This article is a continuation of \cite{BOF}, and we
refer to that article for motivation and context.
The focus of our attention is the function
$\mu(n,m)$, which counts the minimum number of
bulbs that can always be lit by some switching choice 
when each of $n$ bulbs has a dedicated button ($=$switch) 
that switches it 
and up to $m-1$ other bulbs on or off.  The problem is rephrased
in precise terms using vectors and matrices over $\F_2$,
the field with two elements, as follows:
 
Each conceivable wiring from $n$ buttons to $r$ bulbs
may be represented by an element of
the set $\M(n,r,\F_2)$ of all $n\times r$ matrices over $\F_2$,
by letting column $i$ represent the effect of button $i$.
Replacing $n$ and $r$ by their maximum, and filling in with zeros,
we might as well use square matrices, so for us
a wiring corresponds to a directed graph $G$ on $n$ vertices, represented
by an $n\times n$ matrix $W$ over $\F_2$.  A column vector in $\F_2^n$
may represent either the state (lit or unlit) of the $n$
bulbs, or a choice (press or don't press) for $n$ buttons. 
The effect of switch choice $x$  on state $c$ gives state
$Wx+c$.

We are focussed on wirings with $1$ on the diagonal, and we call these
\emph{admissible wirings}, but we shall have occasional use for 
inadmissible wirings. 

The {\it Hamming norm} $|\cdot|:\F_2^n\to\Z_{\ge 0}$ is 
defined by letting
$|u|$ be the 
the number of $1$ entries in $u$. 
We define
$M(W,c):=\max\{\,|Wx+c|: x\in Z_2^n\,\}$.
This number represents the maximal number of bulbs that
can be lit by a choice of switches, given initial state $c$.

Given a wiring $W$, the \emph{associated degree of vertex $i$}
is the Hamming norm of the $i$-th column of $W$ (the out-degree 
of node $i$ in the graph $G$, the number of bulbs affected by button $i$). 
The degree of $W$ is the maximum
associated degree.

For any $n\in\N$, and any set $A$ of $n\times n$ matrices over $\F_2$,
we define
\begin{align*}
\mu_A &= \min\{ M(W,0) \mid W\in A \}\,, \\
\nu_A &= \min\{ M(W,c) \mid W\in A,\; c\in\F_2^n \}\,\\.
\end{align*}
For $n,m\ge 1$, let $A(n,m)$ be the set of $n\times n$ matrices over $\F_2$ that
have $1$s all along the diagonal and satisfy $\deg(W)\le m$. For $n\ge m\ge
1$, let $A^*(n,m)$ be the set of matrices in $A(n,m)$ for which $\deg(i)=m$,
for all $i\in S$. The class of all admissible wirings on $n$ vertices
is $A(n):=A(n,n)$.

The functions we study are:
\begin{alignat*}{3}
\mu(n,m)&:=\mu_{A(n,m)}\,, \qquad \mu^*(n,m)&:=\mu_{A^*(n,m)}\,, \qquad
\mu(n)&:=\mu(n,n)\,, \\
\nu(n,m)&:=\nu_{A(n,m)}\,, \qquad \nu^*(n,m)&:=\nu_{A^*(n,m)}\,, \qquad
\nu(n)&:=\nu(n,n)\,,
\end{alignat*}
It is convenient to define $\mu(0,m)=0$
for all $m\in\N$.
Given $n\ge m$, we have the following trivial inequalities:
\begin{align}
\nu(n,m) &\le \nu^*(n,m) \le \mu^*(n,m) \\
\nu(n,m) &\le \mu(n,m) \le \mu^*(n,m)
\end{align}

\subsection{Results}
General formulae for $\nu$ and $\nu^*$, and formulae for $\mu(\cdot,m)$ and
$\mu^*(\cdot,m)$ for $m=2,3$ were determined in \cite{BOF}. We'll summarise these 
in Section \ref{S:recap} below, but   
right now we mention only that if $m=2,3$, then $\mu(n,m)$
and $\mu^*(n,m)$ are asymptotic to $2n/3$ as $n\to\infty$. By contrast, we
will see that for $m=4,5$, both functions $\mu(n,m)$ and $\mu^*(n,m)$ are asymptotic to
$4n/7$. In fact we have the following result:
\begin{thm}\label{T:m=4} Let $n\in\N$.
\begin{enumerate}
\item For $j=4,5$, $\mu(n,j)$ is given by the equation
$$
\mu(n,j) =
  \begin{cases}
  \setlength{\jot}{55pt}
  \lce\ds{\frac{4n}7}\rce, & n \ne 7k-2 \text{ for some } k\in\N, \\[12pt]
  \lce\ds{\frac{4n}7}\rce+1=4k, & n=7k-2 \text{ for some } k\in\N. \\[.5em]
  \end{cases}
$$
\item If $n\ge 3$, then $\mu^*(n,4)=2\lce\ds{\frac{2n}7}\rce$ is the
    least even integer not less than $\mu(n,4)$.
\end{enumerate}
\end{thm}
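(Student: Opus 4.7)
The plan is to prove matching upper and lower bounds on $\mu(n,j)$ in part (a) and then deduce part (b) via a parity observation.

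\textbf{Upper bound (construction).} The core building block is a $7\times 7$ admissible wiring $W_0$ whose image equals the $[7,3,4]$ Simplex code; every nonzero Simplex codeword has weight exactly $4$, and each column of $W_0$, being itself a codeword with a $1$ on the diagonal, has weight $4$, so $W_0\in A^*(7,4)$ and $M(W_0,0)=4$. Writing $n=7k+r$ with $0\le r\le 6$, form $W=W_0^{\oplus k}\oplus B_r$, where $B_r\in A(r,4)$ is a small correction block. For $r\in\{0,1,2,3,4,6\}$, elementary case-by-case constructions of $B_r$ produce $M(W,0)=\lceil 4n/7\rceil$. The case $r=5$, equivalently $n=7(k+1)-2$, is exceptional: the least $M(B,0)$ over admissible $5\times 5$ wirings is $4$ (attained by taking the image of $B_5$ to be the $[5,4,2]$ even-weight code, for instance with cyclic weight-$2$ columns), yielding $M(W,0)=4(k+1)=\lceil 4n/7\rceil+1$.

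\textbf{Lower bound.} Fix $W\in A(n,5)$ and set $V=\mathrm{image}(W)$, $k=\dim V$. Because $W_{ii}=1$, each coordinate functional on $V$ is nonzero, so $\sum_{v\in V}|v|=n\cdot 2^{k-1}$ and the average weight of a nonzero $v\in V$ is $n\cdot 2^{k-1}/(2^k-1)$. For $k\le 3$ this averaging step yields $\max_{v\in V}|v|\ge \lceil 4n/7\rceil$ directly. For $k\ge 4$ the averaging bound falls below $4n/7$; this is the main obstacle, and I would attack it by structural induction on $n$: either decompose the wiring graph into smaller admissible subwirings along its connected components, or locate a subset $S$ of at most $7$ coordinates whose columns span an embedded Simplex-type subcode forcing $\ge 4|S|/7$ lit bulbs on $S$, and then apply induction on the complement; when no such clean split exists, use the column-weight bound $\le 5$ together with a MacWilliams / linear-programming argument to rule out $V$ having max weight $<4n/7$. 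The $+1$ correction for $n=7k-2$ traces back to a non-existence obstruction already visible at $n=5$: a putative $[5,3]$ image achieving max weight $3$ would have to have weight enumerator $1+z^2+6z^3$, whose MacWilliams dual has negative coefficients; hence $M\ge 4$ when $n=5$, and the inductive step transfers this obstruction to all $n\equiv 5\pmod 7$.

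\textbf{Part (b).} The inclusion $A^*(n,4)\subseteq A(n,4)$ gives $\mu^*(n,4)\ge\mu(n,4)$. Moreover, for $W\in A^*(n,4)$ every column has weight $4$, so $|Wx|\equiv\sum_j 4\,x_j\equiv 0\pmod 2$ for all $x$, meaning $M(W,0)$ is always even; hence $\mu^*(n,4)$ is at least the least even integer $\ge\mu(n,4)$, which one checks equals $2\lceil 2n/7\rceil$. The matching upper bound is obtained by adjusting the correction blocks $B_r$ to have all columns of weight exactly $4$ (using slightly larger blocks when no $r\times r$ block admits column weight $4$), with the parity of $M$ automatically matching.
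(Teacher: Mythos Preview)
Your upper-bound construction and your treatment of part~(b) are essentially what the paper does (the Simplex/Sylvester--Hadamard block for $n=7$, sublinearity to extend, small correction blocks, and the parity observation for $\mu^*$). The real issue is the lower bound in part~(a).

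Your averaging argument only covers $\dim V\le 3$, and your $n=5$ MacWilliams check is fine for that single value. But the plan for general $n$ with $\dim V\ge 4$ --- ``decompose into components, or find a Simplex-type subset, or fall back on MacWilliams/LP'' --- is not a proof. Connected-component decomposition is rarely available (optimal wirings are typically connected), there is no mechanism given for locating a $7$-coordinate Simplex subcode inside an arbitrary admissible $W$, and no LP bound is stated, let alone shown to give $\lceil 4n/7\rceil$ (with the $+1$ at $n\equiv 5\pmod 7$) for every $n$. The phrase ``the inductive step transfers this obstruction'' is doing work that has not been set up.

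The paper's route is quite different and avoids coding-theoretic machinery entirely. It first proves the general upper bound $\mu(n,m)\le U(n,m)$ (your construction is the $m=4$ case). For the matching lower bound it uses \emph{pivoting}: given $W\in A(n,m)$ with a degree-$m$ vertex, one pivots to create a forward-invariant $\hat K_m$; this never increases $M(W,0)$. One then partitions vertices into $A$ (those lying in forward-invariant $\hat K_m$'s) and $B$ (the rest), so that $W_B\in A(n_B,m-1)$. Lighting $B$ optimally and then at least $\nu(n_A)=n_A/2$ vertices of $A$ yields a recursive inequality which, combined with the known formula $\mu(\cdot,3)=U(\cdot,3)$, forces $\mu(\cdot,4)=U(\cdot,4)$; a second pass of the same argument (handling one $F_5$ and its attached $\hat K_4$'s) gives $\mu(\cdot,5)=U(\cdot,5)$. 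The ``$+1$'' at $n=7k-2$ falls out automatically from the shape of $U$. So the missing idea in your proposal is this pivot-and-partition induction on $m$, which replaces the coding/LP argument you were reaching for.
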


It is not hard to show that $\mu(n,m)\ge n/2$ for all $n,m\in\N$. This is
asymptotically sharp according to the following result.

\begin{thm}\label{T:lim}
$\lim\limits_{n\to\infty} \mu(n)/n=1/2$.
\end{thm}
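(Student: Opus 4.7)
The plan is to prove the theorem by separating the routine lower bound from the substantive upper bound. For the lower bound $\mu(n)\ge n/2$, observe that for any $W\in A(n)$, the $i$-th row contains the diagonal entry $W_{ii}=1$, so it is nonzero; hence $(Wx)_i$ is a nontrivial $\F_2$-linear form in $x$ that takes the value $1$ on exactly half of $\F_2^n$. Summing, the expectation of $|Wx|$ over uniformly random $x\in\F_2^n$ is $n/2$, so some $x$ achieves $|Wx|\ge n/2$. The remaining task is to produce admissible wirings whose maximum is close to $n/2$; for this I will construct efficient wirings at special sizes from a Hadamard/Reed--Muller code and patch them together by block-diagonal sums.

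For each $m\ge 1$, I claim there is an admissible wiring $W^*\in A(N_m)$ of size $N_m:=2^m-1$ with $M(W^*,0)\le 2^{m-1}$. Label the coordinates of $\F_2^{N_m}$ by the nonzero vectors of $\F_2^m$. For each $j\in\F_2^m\setminus\{0\}$ pick an index $i(j)\in\{1,\dots,m\}$ with $j_{i(j)}=1$ (which exists because $j\ne 0$), and take the $j$-th column of $W^*$ to be the vector whose entry at position $k\in\F_2^m\setminus\{0\}$ is $k_{i(j)}$. Then $W^*_{jj}=j_{i(j)}=1$, so $W^*\in A(N_m)$, and every column lies in the $m$-dimensional code $C\subset\F_2^{N_m}$ spanned by the coordinate functions $x_1,\dots,x_m$ restricted to $\F_2^m\setminus\{0\}$. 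Every nonzero element of $C$ has the form $k\mapsto a\cdot k$ for some $a\in\F_2^m\setminus\{0\}$; such a functional takes the value $1$ on exactly $2^{m-1}$ points of $\F_2^m$, none of which is $k=0$, so its Hamming weight on $\F_2^m\setminus\{0\}$ equals $2^{m-1}$. Therefore $M(W^*,0)\le 2^{m-1}=(N_m+1)/2$, giving $\mu(N_m)\le 2^{m-1}$.

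Finally, given $\varepsilon>0$, fix $m$ with $2^{m-1}/N_m<\tfrac12+\tfrac\varepsilon 2$, and for $n\ge 2N_m/\varepsilon$ write $n=kN_m+r$ with $0\le r<N_m$ and form the block-diagonal wiring $W^{(n)}:=\diag(W^*,\dots,W^*,I_r)\in A(n)$ using $k$ copies of $W^*$. Then $M(W^{(n)},0)\le k\cdot 2^{m-1}+r\le n\cdot(2^{m-1}/N_m)+N_m$, so $\mu(n)/n\le 2^{m-1}/N_m+N_m/n<\tfrac12+\varepsilon$, yielding $\limsup_{n\to\infty}\mu(n)/n\le 1/2$ and completing the proof. \textbf{The main obstacle} is the construction at size $N_m$: a direct use of the $\F_2$-valued Sylvester Hadamard matrix at size $2^m$ fails because no nonzero linear functional on $\F_2^m$ takes the value $1$ at $0$, so $W_{00}=1$ cannot be arranged; puncturing at the origin sidesteps this obstruction while preserving the constant weight $2^{m-1}$ that makes the upper bound sharp.
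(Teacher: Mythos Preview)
Your proof is correct and follows essentially the same route as the paper: the lower bound is the averaging argument of \rf{L:mean}, and the upper bound comes from the same Hadamard/simplex-code wiring on $2^m-1$ vertices (your coordinate-function description of $W^*$ is exactly the paper's matrix $V$ obtained from the Sylvester--Hadamard construction in \rf{T:m=2^k}), extended to general $n$ by block-diagonal padding, which is precisely the sublinearity step the paper invokes. The only cosmetic difference is that you build $W^*$ directly from linear functionals on $\F_2^m\setminus\{0\}$ rather than via the Kronecker-product recursion for $H_{2^{k+1}}$, but the resulting wiring and the estimates are identical.
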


In fact, this shows that $\displaystyle \frac{\mu(n)}{\nu(n)} \to 1$
(cf. Theorem C below).   

\subsection{Outline}
The article is organized as follows. After the some introductory material in
Section \ref{S:review}, we consider $\mu(n,m)$ and
$\mu^*(n,m)$ for numbers of the form $(n,m)=(2^{k+1}-1,2^k)$ in
\rf{S:m=2^k}. This special case involves a wiring related to Hadamard
matrices, and allows us to deduce \rf{T:lim}.

In \rf{S:U}, we give an explicit upper bound $U(n,m)$ for $\mu(n,m)$. This
upper bound has the appearance of being rather sharp: indeed, we know of no
pair $(n,m)$ such that $\mu(n,m)<U(n,m)$. Whether $\mu(n,m)=U(n,m)$ for all $n,m$
is an interesting open question. The upper bound $U(n,m)$ sheds light on the
formulae for $\mu(n,m)$ given above and in \rf{S:review} which, although
convenient for understanding the asymptotics of $\mu(n,m)$ as $n\to\infty$,
do not seem to follow any clear pattern as $m$ changes.
The sequence $U(n,n)$ is connected to OEIS sequence A046699, which
is of meta-Fibonacci type, and has a number of combinatorial 
descriptions in terms of trees.

In \rf{S:m near 2^k}, we prove that if $\mu(\cdot,m)=U(\cdot,m)$ for
$m=2^k-2$, then this equation also holds for $m=2^k+i$, $i\in\{-1,0,1\}$.
\rf{T:m=4}(a) will follow immediately from this result but \rf{T:m=4}(b)
still requires a proof, which can be found in \rf{S:m=4}.

\section{A recap of previous results and ideas} \label{S:review}\label{S:notation}\label{S:recap}

For ease of reference, we state and label some
results from \cite{BOF}. We need them either for
proofs or for comparison purposes. 

\subsection{Theorems from \cite{BOF}}
We begin by listing the three main results in \cite{BOF}: in the order
listed below, these were Theorems 1.1, 1.2, and 3.2 in that article.

\begin{Thm}\label{T:m=2} Let $n\in\N$.
\begin{enumerate}
\item $\mu(n,2) = \lce\ds{\frac{2n}3}\rce$.
\item If $n\ge 2$, then $\mu^*(n,2) = 2\lce\ds{\frac{n}3}\rce$ is the
    least even integer not less than $\mu(n,2)$.
\end{enumerate}
\end{Thm}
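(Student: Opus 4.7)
The plan is to establish matching upper and lower bounds for both parts, exploiting the partial functional-graph structure of wirings in $A(n,2)$.

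For (a), I would first construct an explicit wiring realizing $\lceil 2n/3\rceil$: partition the $n$ bulbs into $\lfloor n/3\rfloor$ triples plus $r\in\{0,1,2\}$ singletons, wire each triple as a directed $3$-cycle so that its $3\times 3$ block is $I+P$ with $P$ a cyclic permutation matrix, and give each singleton only a self-loop. Each $3$-cycle block satisfies $\mathbf{1}^T(Wx)\equiv 0\pmod 2$, so at most $2$ of its $3$ bulbs can be simultaneously lit; the singletons can each be lit independently by pressing their own buttons. This gives $M(W,0)=\lceil 2n/3\rceil$.

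For the matching lower bound in (a), given $W\in A(n,2)$, let $G$ be the directed graph on $\{1,\dots,n\}$ with an edge $i\to j$ whenever $W_{ji}=1$ and $i\ne j$. Since each column of $W$ carries at most one off-diagonal $1$, $G$ has out-degree at most $1$ and is a partial functional graph; every connected component of its underlying undirected graph is therefore a tree with a unique sink, or a single directed cycle with trees hanging inward. Because $W$ is block-diagonal with respect to these components, the optimization decouples. On each component I would process the non-cycle vertices bottom-up, starting from those with no incoming $G$-edge: the local equation $(Wx)_j = x_j + \sum_{i\colon f(i)=j}x_i$ (where $f(i)$ is the off-diagonal $1$ in column $i$, if any) has $x_j$ linear, so each step uniquely determines $x_j$ to light bulb $j$. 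After processing all tree vertices, each cycle vertex $c_i$ carries a fixed contribution $t_i\in\F_2$ from its tree subtrees, and the remaining constraints collapse to the cyclic system $x_{c_i}+x_{c_{i-1}} = 1+t_i$, which is solvable over $\F_2$ iff $k+\sum_i t_i$ is even; if solvable, the whole component is lit, and otherwise all but one vertex can be. A short case check shows that every ``bad'' (unsolvable) component has at least $3$ vertices, so if $b$ counts bad components then $3b\le n$, giving $M(W,0)\ge n-b\ge n-\lfloor n/3\rfloor = \lceil 2n/3\rceil$.

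For part (b), every column of any $W\in A^*(n,2)$ has exactly two $1$s, so $\mathbf{1}^T Wx = 2\,\mathbf{1}^Tx = 0$ in $\F_2$ and $|Wx|$ is always even; combined with the bound from (a) this forces $\mu^*(n,2)\ge 2\lceil n/3\rceil$. A matching upper bound comes from cycle components of length $2$, $3$, or $4$ (each with every column-degree equal to $2$): take $k$ disjoint $3$-cycles when $n=3k$, add one $2$-cycle when $n=3k+2$, and replace one $3$-cycle by a $4$-cycle (all four of whose bulbs can be lit since the length is even) when $n=3k+1$. The technical heart of the whole proof lies in the lower bound of (a): ensuring that the bottom-up propagation terminates without dependency loops, and verifying that the cycle-obstruction really forces every bad component to contain at least three vertices.
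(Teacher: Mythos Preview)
This statement is not proved in the present paper: it is Theorem~1.1 of the earlier companion article~\cite{BOF}, quoted here (as Theorem~\ref{T:m=2}) in the recap section without a proof. So there is no in-paper argument to set your proposal against.

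That said, your argument is correct. The $3$-cycle-plus-singletons construction gives the upper bound in~(a), and the functional-graph analysis for the lower bound is sound. The one step that deserves an explicit check is that every ``bad'' component (one whose cyclic system $x_{c_i}+x_{c_{i-1}}=1+t_i$ is inconsistent) has at least three vertices: a $1$-vertex component has no cycle at all, and a pure $2$-cycle has $k+\sum_i t_i=2$, which is even, so the minimal bad configurations are either a $3$-cycle or a $2$-cycle with at least one attached tree vertex. With that in hand, $b\le\lfloor n/3\rfloor$ and hence $M(W,0)\ge n-b\ge\lceil 2n/3\rceil$. The parity lower bound and the explicit cycle constructions in~(b) are also fine; note that your $n=3k+1$ construction needs $k\ge1$, which is guaranteed by the hypothesis $n\ge2$.

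Judging from the machinery the paper imports from~\cite{BOF} (sublinearity, the recursive alternative of Lemma~\ref{L:alternative}, and the pivoting technique), the original proof there appears to proceed by induction and pivoting rather than by your direct structural decomposition of out-degree-$\le 1$ functional graphs. Your route is more elementary and self-contained for $m=2$, at the cost of being special to that case: the functional-graph picture does not survive once columns can carry several off-diagonal $1$s, whereas the pivoting framework is what the authors extend to larger~$m$.
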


\begin{Thm}\label{T:m=3} Let $n\in\N$.
\begin{enumerate}
\item $\mu(n,3) = \mu(n,2)$.
\item If $n\ge 3$, then
$$
\mu^*(n,3) =
  \begin{cases}
  4k-1,     & n = 6k-3 \text{ for some } k\in\N, \\
  \mu(n,3), & \text{otherwise}.
  \end{cases}
$$
\end{enumerate}
\end{Thm}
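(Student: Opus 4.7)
The theorem splits into two parts, which I would handle separately.

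For (a), the upper bound $\mu(n,3) \le \mu(n,2)$ is immediate from $A(n,2) \subseteq A(n,3)$, so the content is the matching lower bound. The strategy I would pursue is a reduction to \rf{T:m=2}. Given $W \in A(n,3)$, let $S$ be the set of columns of weight exactly $3$. Delete one off-diagonal entry from each column in $S$ to produce a matrix $W' \in A(n,2)$; \rf{T:m=2} then supplies some $x^* \in \F_2^n$ with $|W'x^*| \ge \lceil 2n/3 \rceil$. The delicate point is that $Wx^* = W'x^* + \Delta(x^*)$, where $\Delta(x^*)$ depends on the deletion choices and on the support of $x^*$, and a naive deletion can flip $1$'s of $W'x^*$ back to $0$. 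I would handle this by choosing the deletions adaptively (e.g.\ greedily column by column while tracking the running Hamming weight), or by averaging over all $2^{|S|}$ deletion patterns and selecting one for which $|Wx^*|\ge|W'x^*|$.

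For (b), the inclusion $A^*(n,3) \subseteq A(n,3)$ gives $\mu^*(n,3)\ge\mu(n,3)$ for free. In the generic case $n \neq 6k-3$, I would exhibit an explicit $W\in A^*(n,3)$ meeting this lower bound by pasting together uniform-degree-$3$ gadgets; natural basic blocks are the $3$-vertex all-ones wiring (maximum $3$) and a $6$-vertex analogue (maximum $4$), supplemented by one residue-specific remainder gadget for each class of $n\bmod 6$ except the bad one. For the exceptional case $n=6k-3$, a matching construction achieving $4k-1$ should follow from a small perturbation of these generic blocks.

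The principal obstacle is the lower bound $\mu^*(6k-3,3)\ge 4k-1$. A parity calculation shows that $|Wx|\equiv|x|\pmod{2}$ for any $W\in A^*(n,3)$ (each column has odd weight $3$), which by itself does \emph{not} exclude the even value $4k-2$. The argument must therefore invoke a finer structural invariant of the $2$-out-regular digraphs on $6k-3$ vertices; plausible routes include a double count of pairs $(x,i)$ with $(Wx)_i=1$ across $x$ of a fixed weight, or a case analysis of the $x$-vectors that could potentially achieve $|Wx|=4k-2$, from which a contradiction with the assumed uniform cap should emerge.
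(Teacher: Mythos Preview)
This theorem is not proved in the present paper; it is quoted from \cite{BOF} as Theorem~B in the recap section. There is thus no proof here to compare against directly, but the tools from \cite{BOF} that the paper does reproduce---pivoting and \rf{L:alternative}---point to a route quite different from yours, and your proposal has a genuine gap in part~(a).

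Your deletion argument for (a) is circular as written. You propose to delete one off-diagonal entry from each degree-$3$ column to obtain $W'\in A(n,2)$, invoke \rf{T:m=2} to get $x^*$ with $|W'x^*|\ge\lceil 2n/3\rceil$, and then repair the discrepancy $Wx^*-W'x^*$. But $x^*$ depends on $W'$, which depends on the deletion choices, so ``choosing the deletions adaptively while tracking the running Hamming weight'' is not meaningful: you cannot track $W'x^*$ before $W'$ (and hence $x^*$) exists. The averaging variant has the same defect: if you fix $x^*$ first and then average over deletion patterns, the guarantee $|W'x^*|\ge\lceil 2n/3\rceil$ is lost for most patterns; if instead $x^*$ varies with the pattern, there is no common quantity to average. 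The argument that actually works---and which this paper reuses almost verbatim for the analogous \rf{P:2^{k+1}-1}---is induction on $n$ via \rf{L:alternative}: for $m=3$ the lemma says that either $\mu(n,3)=\mu(n,2)$ already, or $\mu(n,3)\ge\mu(n-3,3)+2$, and one closes the induction using the explicit formula $\mu(\cdot,2)=\lceil 2\cdot/3\rceil$ together with sublinearity. The mechanism behind \rf{L:alternative} is pivoting about a degree-$3$ vertex to produce a forward-invariant $\hat K_3$, which splits off cleanly; no column-by-column surgery on $W$ is needed.

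For part~(b), your plan for the generic upper bound via explicit gadgets is reasonable in outline, but the exceptional lower bound $\mu^*(6k-3,3)\ge 4k-1$ is the heart of the matter and you have not supplied an argument. You are right that parity alone (each column of odd weight, hence $|Wx|\equiv|x|\pmod 2$) does not rule out the even value $4k-2$: one simply takes $|x|$ even. The suggestions of a double count or a case analysis of near-extremal $x$ are too vague to evaluate; without a concrete invariant or a structural classification of wirings in $A^*(6k-3,3)$ with $M(W,0)=4k-2$, this remains an open step in your outline.
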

Note that $\mu^*(n,3) = \mu(n,3)+1$ in the exceptional case $n=6k-3$.

\begin{Thm}\label{T:nu}
Let $n,m\in\N$, $m>1$.
\begin{enumerate}
\item $\nu(n) = \nu(n,m) = \lce \ds{\frac{n}2} \rce$.
\item If $n\ge m$, then
$$
\nu^*(n,m) =
  \begin{cases}
  \nu(n,m)+1, &\text{if $\/n$ is even and $\/m$ odd}, \\
  \nu(n,m),   &\text{otherwise}.
  \end{cases}
$$
In particular, $\nu^*(n,2)=\nu^*(n)=\nu(n)$ for all $n>1$.
\end{enumerate}
\end{Thm}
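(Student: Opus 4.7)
The plan is to prove both parts by combining an averaging bound (for the lower bound) with explicit wirings (for the upper bound), supplemented by a parity argument for the refinement in part (b).

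For the lower bound in (a), I would average $|Wx+c|$ over $x\in\F_2^n$. Because $W$ has $1$s along its diagonal, every row of $W$ is non-zero, so each coordinate map $x\mapsto(Wx+c)_i$ is a non-constant affine form in $x$ and equals $1$ on exactly $2^{n-1}$ inputs. Summing over $i$ gives $\sum_x|Wx+c|=n\cdot 2^{n-1}$, so the average weight is $n/2$; since the maximum weight is an integer, we obtain $M(W,c)\ge\lce n/2\rce$ for every admissible $W$ and every $c$, whence $\nu(n,m)\ge\lce n/2\rce$ uniformly in $m\ge 1$. For the matching upper bound I would exhibit the pair wiring in $A(n,2)$: group indices into pairs $\{2i-1,2i\}$ and take $w_{2i-1}=w_{2i}=e_{2i-1}+e_{2i}$, appending $w_n=e_n$ when $n$ is odd. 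With $c=(1,0,1,0,\ldots)$, the two bulbs of each pair are always in opposite states and so contribute exactly one lit bulb, while the singleton contributes at most one; hence $M(W,c)=\lce n/2\rce$, and (a) follows since this wiring has degree $2\le m$.

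For part (b), the pair wiring already has exact degree $2$, so $\nu^*(n,2)=\nu(n,2)$. For $m>2$ the generic equality requires exact-degree-$m$ wirings realising $\lce n/2\rce$; a natural candidate is a block construction that partitions $\{1,\ldots,n\}$ into blocks of size $m$, setting every column in a block equal to the indicator of that block, together with partial or overlapping blocks to absorb non-divisibility and parity issues. Tuning $c$ to balance $0$s and $1$s within each block then forces the target weight. For the exceptional lower bound when $n$ is even and $m$ is odd, the parity identity
$$
|Wx+c|\equiv\mathbf{1}^\top(Wx+c)\equiv m\,\mathbf{1}^\top x+\mathbf{1}^\top c\equiv\mathbf{1}^\top x+\mathbf{1}^\top c\pmod 2
$$
is a non-constant function of $x$. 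Combined with the averaging identity, the assumption $M(W,c)\le n/2$ would force every $|Wx+c|$ to equal $n/2$, a fixed parity, contradicting the above; hence $\nu^*(n,m)\ge n/2+1$, and a matching block wiring provides equality.

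The main technical obstacle is the construction side of (b): producing exact-degree-$m$ wirings that attain the upper bound in every case, especially when $n$ is not a multiple of $m$, and when $m$ is odd (where the simplest block construction overshoots and must be combined with pair-type or overlapping-block modifications). The averaging and parity arguments are essentially one-line identities; the combinatorial engineering of the optimal wirings is where the work lies.
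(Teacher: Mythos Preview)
This theorem is not actually proved in the present paper: it is quoted from \cite{BOF} (as Theorem~3.2 there) in the recap section, so there is no in-paper argument to compare against. That said, your outline is the standard route and is essentially what one expects the original proof to look like. The averaging identity you use for the lower bound in (a) is exactly \rf{L:mean}, and your pair wiring for the upper bound is correct and gives $\nu(n,2)=\lce n/2\rce$, hence $\nu(n,m)=\lce n/2\rce$ for all $m\ge2$. Your parity argument for the exceptional lower bound in (b) is also correct: when every column has odd weight, $|Wx+c|\equiv |x|+|c|\pmod 2$ is non-constant in $x$, while $M(W,c)\le n/2$ together with the averaging identity would force $|Wx+c|\equiv n/2$ identically, a contradiction.

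The genuine gap is exactly where you locate it: the upper-bound constructions for $\nu^*(n,m)$ in part (b). Your block idea works cleanly only when $m\mid n$ and $m$ is even (take $c$ with $m/2$ ones in each block); outside that case you have only gestured at ``partial or overlapping blocks'' without saying how to keep every column of weight exactly $m$ while still pinning $M(W,c)$ to $\lce n/2\rce$ (or $\lce n/2\rce+1$ in the exceptional case). You need explicit constructions covering: $m$ even with $m\nmid n$; $m$ odd with $n$ odd; and $m$ odd with $n$ even (hitting $n/2+1$ exactly). None of these is deep, but each requires a concrete wiring and a verification, and your proposal does not supply them. Until those are written out, part (b) is a plan rather than a proof.
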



\subsection{Lemmas from \cite{BOF}}
The next four results were, in the order listed below, Lemmas 3.1, 5.1, and
5.2, and Corollary 3.3 in \cite{BOF}.

\begin{Lem}\label{L:mean}
Let $n\in\N$. For all $W\in A(n)$ and $c\in\F_2^n$, the mean value of
$|Mx+c|$ over all $x\in\F_2^n$ is $n/2$. In particular, $M(W,c)\ge n/2$ and
$M(W,c)>n/2$ if the cardinality of $\{i\in[1,n]\cap\N\mid c_i=1\}$ is not
$n/2$.
\end{Lem}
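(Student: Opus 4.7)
The plan is to compute the exact mean of $|Wx+c|$ over $x\in\F_2^n$ by swapping the order of summation. For a fixed coordinate $i\in\{1,\dots,n\}$, the $i$th entry of $Wx+c$ is the affine functional $c_i+\sum_j W_{ij}x_j$ of $x$. Admissibility of $W$ forces $W_{ii}=1$, so the $i$th row of $W$ is a nonzero linear form on $\F_2^n$. Therefore $x\mapsto (Wx)_i$ takes each of $0$ and $1$ exactly $2^{n-1}$ times as $x$ ranges over $\F_2^n$, and the same is true of $x\mapsto (Wx+c)_i$, since translation by $c_i\in\F_2$ is a bijection of $\F_2$.

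Summing the contributions over $i$ yields $\sum_{x\in\F_2^n}|Wx+c|=n\cdot 2^{n-1}$, so the mean over $x$ is $n/2$. Since the maximum of a finite set of real numbers is at least its average, this immediately gives $M(W,c)\ge n/2$.

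For the strict inequality, I would use $x=0$ as a test point: this always produces the value $|W\cdot 0+c|=|c|$. If $|c|>n/2$, then $M(W,c)\ge|c|>n/2$ already. If instead $|c|<n/2$, then $|c|$ lies strictly below the mean $n/2$, which forces some other $x$ to produce a value strictly above $n/2$; hence $M(W,c)>n/2$ in this case too. The only step requiring any care, and the point I would flag as the (minor) obstacle, is handling the two cases $|c|>n/2$ and $|c|<n/2$ uniformly, but the combination of the $x=0$ substitution with the mean formula dispatches both.
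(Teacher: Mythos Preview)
Your argument is correct. The mean computation via swapping the order of summation, using that each row of $W$ is nonzero (thanks to the diagonal $1$), is the standard approach; the strict-inequality clause is handled cleanly by testing $x=0$ and invoking the mean. Note that in the present paper this lemma is only quoted from \cite{BOF} (as Lemma~3.1 there) and not re-proved, so there is no in-paper proof to compare against; your proof is exactly the natural one.
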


\begin{Lem}\label{L:alternative}
Let $m\ge 2$ and $n\ge 1$. Then either $\mu(n+m,m)=\mu(n+m,m-1)$, or
$$ \mu(n+m,m)\ge \mu(n,m)+\nu(m) = \mu(n,m) + \lce {\frac{m}2} \rce\,. $$
\end{Lem}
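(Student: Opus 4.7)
The plan is a dichotomy argument. Since $A(n+m,m-1) \subseteq A(n+m,m)$, one always has $\mu(n+m,m) \le \mu(n+m,m-1)$. If equality fails, no wiring in $A(n+m,m-1)$ can attain the value $\mu(n+m,m)$, so any minimizer $W \in A(n+m,m)$ with $M(W,0) = \mu(n+m,m)$ must have at least one column of Hamming norm exactly $m$. I would then produce the lower bound by a constructive ``decoupling'' of $W$ around such a column.

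Relabel so that column $1$ of this minimizer $W$ has support $S$, where $1 \in S$ and $|S|=m$; set $T := \{1,\dots,n+m\} \setminus S$, so $|T|=n$. The principal submatrix $W' := W_{T,T}$ still has $1$s on its diagonal and degree at most $m$, hence $W' \in A(n,m)$. Choose $x'$ indexed by $T$ with $|W'x'| \ge \mu(n,m)$, and extend it to a switch vector $x \in \F_2^{n+m}$ by setting $x_j = 0$ for $j \in S \setminus \{1\}$ and leaving $x_1$ free.

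The two blocks then decouple. For $i \in T$ one has $(Wx)_i = (W'x')_i$, because $W_{i1}=0$ (column $1$ is supported on $S$) and the buttons in $S \setminus \{1\}$ are unpressed; hence at least $\mu(n,m)$ bulbs in $T$ are lit regardless of $x_1$. For $i \in S$ the formula collapses to $(Wx)_i = x_1 + b_i$, where $b_i := \sum_{j \in T} W_{ij} x'_j$ is already determined by $x'$. The two choices $x_1 \in \F_2$ therefore produce states on $S$ whose Hamming weights sum to $m$, and selecting the larger one lights at least $\lceil m/2 \rceil$ of those $m$ bulbs. Adding the two contributions yields
$$ \mu(n+m,m) \;=\; M(W,0) \;\ge\; \mu(n,m) + \lceil m/2 \rceil, $$
which is the claimed inequality after invoking Theorem \ref{T:nu}(a) to identify $\lceil m/2 \rceil$ with $\nu(m)$.

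I do not anticipate a substantive obstacle: the only real choice in the argument is to freeze the buttons inside $S\setminus\{1\}$ at zero, which is what simultaneously (i) makes the bulbs on $T$ depend only on $x'$ and (ii) leaves $x_1$ as a single free switch that acts as a global flip on the $m$ bulbs of $S$. The rest is straightforward bookkeeping together with the observation that in any $\F_2$ block a single global flip recovers at least half of the bits, matching $\nu(m)$.
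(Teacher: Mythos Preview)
Your argument is correct. The dichotomy is set up properly, the block decomposition around the support $S$ of the degree-$m$ column is clean, and the key observation---that freezing the buttons in $S\setminus\{1\}$ makes the $T$-bulbs depend only on $x'$ while leaving $x_1$ as a global flip on $S$---is exactly what is needed. Every step checks.

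As for comparison: this paper does not actually prove \rf{L:alternative}; it is quoted verbatim from \cite[Lemma~5.1]{BOF}. That lemma sits in the section of \cite{BOF} where pivoting is introduced, and the argument there is phrased via a full pivot about the degree-$m$ vertex: one replaces $W$ by $W^1$, so that $S=F(1)$ becomes a forward-invariant $\hat K_m$, then lights $\ge\mu(n,m)$ vertices in $T$ using $W_T$, and finally presses inside the $\hat K_m$ to light $\ge\nu(m)$ of its vertices without disturbing $T$. Your approach and the pivoting approach are really the same idea seen from two sides: pivoting makes all $S$-buttons equivalent to button~$1$, whereas you simply refuse to press any $S$-button except button~$1$. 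Your version is more elementary in that it never modifies the wiring and never invokes the inequality $M(W^1,0)\le M(W,0)$; the pivoting version has the advantage of fitting into the general ``Partition by Degree'' framework that the paper uses repeatedly afterwards.
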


\begin{Lem}\label{L:nn'm}\label{L:F}
Let $n,m,n'\in\N$, with $n\ge m$. Then
$$ \mu^*(n+n',m+1)\le \mu^*(n,m)+n'\,. $$
\end{Lem}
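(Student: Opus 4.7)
The plan is to exhibit, for any extremal $W^* \in A^*(n,m)$ realizing $M(W^*, 0) = \mu^*(n,m)$, an $(n+n')\times(n+n')$ admissible wiring $W' \in A^*(n+n', m+1)$ with $M(W', 0) \le \mu^*(n,m) + n'$. The key trick is to make all $n'$ new switches point to one and the same vector $v$ lying in the column span of $W^*$, so that the effect of pressing any combination of new switches can be absorbed back into the action of $W^*$ on a suitable perturbation of the original switch choice $x$.

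Concretely, I would take $v := W^*e_1$ (the first column of $W^*$), so $v \in \F_2^n$ has weight $|v| = m$. Then I would define $W'$ in block form as
$$
W' \;=\; \begin{pmatrix} W^* & v\,\mathbf{1}^{T} \\ e_1\mathbf{1}^{T} & I_{n'} \end{pmatrix},
$$
so every new column has $v$ as its top $n$ entries plus a single $1$ on the diagonal, while every old column picks up one extra $1$ in row $n+1$ (and nowhere else). Checking that $W' \in A^*(n+n', m+1)$ is then immediate: the diagonal is all $1$s, each original column has weight $m+1$, and each new column has weight $|v|+1 = m+1$.

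To bound $M(W',0)$, write $y = (x,t)$ with $x\in \F_2^n$ and $t \in \F_2^{n'}$, and set $T := \sum_k t_k$ and $X := \sum_j x_j$ (both mod $2$). A direct computation, using $v = W^*e_1$ to simplify the top block, gives
$$
W'y \;=\; \begin{pmatrix} W^*(x + Te_1) \\ Xe_1 + t \end{pmatrix}.
$$
Hence the top $n$ coordinates of $W'y$ contribute Hamming weight at most $M(W^*, 0) = \mu^*(n,m)$, while the bottom $n'$ coordinates (being a vector in $\F_2^{n'}$) contribute at most $n'$; adding gives $|W'y| \le \mu^*(n,m) + n'$ for every $y$, which is exactly what is needed. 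I do not expect any significant obstacle: the whole argument pivots on the single observation that we may choose $v$ in the image of $W^*$ while still having weight exactly $m$, which holds automatically since every column of $W^*$ has weight $m$; once that is set up the rest is a routine block-matrix calculation.
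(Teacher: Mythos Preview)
Your argument is correct: the block matrix $W'$ you construct lies in $A^*(n+n',m+1)$, and the identity $v=W^*e_1$ lets you rewrite the top block of $W'y$ as $W^*(x+Te_1)$, giving the bound $|W'y|\le \mu^*(n,m)+n'$ immediately. Note that the present paper does not actually prove this lemma; it is quoted without proof from \cite{BOF} (as their Lemma~5.2), so there is no in-paper argument to compare against. Your construction is a clean and self-contained realization of the inequality.
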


\begin{Cor} \label{C:sublinear}
If $\la$ is any one of the four functions $\mu$, $\mu^*$, $\nu$, or $\nu^*$,
then $\la(\cdot,m)$ is sublinear for all $m$:
\begin{equation}\label{E:sublinear}
\la(n_1+n_2,m) \le \la(n_1,m) + \la(n_2,m)\,,
\end{equation}
as long as this equation makes sense (i.e.~we need $n_1,n_2\ge m$ if
$\la=\mu^*$ or $\la=\nu^*$).
\end{Cor}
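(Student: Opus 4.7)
The plan is to prove sublinearity by a direct sum (block diagonal) construction. Given optimal wirings $W_1\in A(n_1,m)$ and $W_2\in A(n_2,m)$ (or in $A^*$, as appropriate) achieving the relevant minimum, I form the block diagonal wiring
$$
W \;=\; \begin{pmatrix} W_1 & 0 \\ 0 & W_2 \end{pmatrix} \in \M(n_1+n_2,n_1+n_2,\F_2).
$$
Since $W_1$ and $W_2$ have $1$s on the diagonal and degree at most $m$, so does $W$; moreover, if the $W_i$ lie in $A^*(n_i,m)$, then every column of $W$ has Hamming norm exactly $m$, so $W\in A^*(n_1+n_2,m)$ (this is why the starred variants require $n_1,n_2\ge m$, so that $A^*(n_i,m)$ is nonempty).

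The key observation is a separation identity for the Hamming norm: for any $x=(x_1,x_2)\in\F_2^{n_1}\oplus\F_2^{n_2}$ and any $c=(c_1,c_2)$ written in matching block form,
$$
|Wx+c| \;=\; |W_1 x_1 + c_1| + |W_2 x_2 + c_2|,
$$
because the two blocks of $Wx+c$ are independent functions of the two blocks of $x$. Consequently, since the maximum of a sum of independent functions splits as the sum of the maxima,
$$
M(W,c) \;=\; M(W_1,c_1) + M(W_2,c_2).
$$

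Applying this with $c=0$ and $W_i$ chosen to attain $\mu(n_i,m)$ (resp.\ $\mu^*(n_i,m)$) yields the sublinearity bound for $\mu$ and $\mu^*$. Choosing $W_i$ and $c_i$ to simultaneously attain $\nu(n_i,m)$ (resp.\ $\nu^*(n_i,m)$) and using $c=(c_1,c_2)$ yields the bound for $\nu$ and $\nu^*$. In every case the constructed $W$ lies in the required admissibility class, so the value on the right is an upper bound for $\lambda(n_1+n_2,m)$, proving \eqref{E:sublinear}.

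There is no serious obstacle here; the only point requiring any care is the bookkeeping for the starred classes, where one must verify that the block diagonal of two column-Hamming-weight-$m$ matrices is again column-Hamming-weight-$m$, and note the corresponding hypothesis $n_1,n_2\ge m$ needed for the starred statement to be meaningful.
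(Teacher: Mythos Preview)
Your proof is correct and is exactly the standard block-diagonal argument one expects here. The present paper does not reprove this corollary (it is imported from \cite{BOF} as Corollary~3.3), but the same block-diagonal construction you use is precisely the technique the authors employ elsewhere (e.g.\ in the proof of \rf{T:U sublinear}), so your approach coincides with theirs.
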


\subsection{Edge functions}
Associated with the graph $G$ is its vertex set $S$ (which we treat as 
an initial segment $S(n):=\{1,\ldots,n\}$ of
the set $\N$ of natural numbers) and the
{\it edge function} $F:S\to 2^S$, where $j\in F(i)$ if there is
an edge from $i$ to $j$, and the {\it backward edge function} $F^{-1}:S\to
2^S$, where $j\in F^{-1}(i)$ if there is an edge from $j$ to
$i$. We extend the definitions of $F$ and $F^{-1}$ to $2^S$ in the usual
way: $F(T)$ and $F^{-1}(T)$ are the unions of $F(i)$ or $F^{-1}(i)$,
respectively, over all $i\in T\subset S$. We say that $T\subset S$ is {\it
forward invariant} if $F(T)\subset T$, or {\it backward invariant} if
$F^{-1}(T)\subset T$. Given a wiring $W$, associated graph $G$, and
$T\subset S$, we denote by $W_T$ and $G_T$ the subwiring and subgraph,
respectively, associated with the vertices in $T$: more precisely, $W_T$ is
the matrix obtained by deleting all rows and columns of $W$ other than those
with index in $T$, and $G_T$ is obtained by retaining only the vertices in
$T$ and those edges in $G$ between vertices in $T$.

\subsection{Pivoting} We now recall the concept of {\it pivoting}, as introduced in
\cite[Section~5]{BOF}. Pivoting about a vertex $i$, $1\le i\le n$, is a way
of changing the given wiring $W$ to a special wiring $W^i$ such that
$M(W^i,c)\le M(W,c)$. Additionally, pivoting preserves the classes $A(n,m)$
and $A^*(n,m)$.

Let us fix a wiring $W=(w_{i,j})$ on $n$ vertices, and let $F:S\to 2^S$
denote the edge function associated to $W$, where $S=S(n)$. Given
$T\subset S$, and $i\in S$, we define $W^{i,T}$ by replacing the $j$th
column of $W$ by its $i$th column whenever $j\in F(i)\setminus T$. We refer
to the wiring $W^{i,T}$ as the {\it pivot of $W$ about $i$ relative to $T$}.
If $T$ is nonempty, we refer to this process as {\it partial pivoting},
while if $T$ is empty we call it {\it (full) pivoting} and 
write $G^i$, $W^i$, and $F^i$ for the resulting graph, matrix,
and edge function, respectively.

As in \cite{BOF}, we use the notation $\hat K_r$ to denote
an augmented complete graph on $r$ vertices, i.e.
a complete graph augmented by a loop at each
vertex.  Full pivoting about vertex $i$ just rewires $F(i)$
so that it becomes a $\hat K_{\deg(i)}$, which is 
thus a forward-invariant subgraph of $W^i$.

We refer to a forward-invariant $\hat K_r$ subgraph of a wiring graph
$W$ as an $F_r$ (relative to $W$).

For $t\in\{0,1\}$, we denote by $t_{p\times q}$ the $p\times q$ matrix all
of whose entries equal $t$, and let $t_p=t_{p\times p}$. 
The matrix of a  $\hat K_r$, is $1_{r\times r}$.
This is (of course) different from the $r\times r$ identity matrix $I_r$,
except when $r=1$.

Pivoting relative to any $T$ is a process with several nice properties: it
has the non-increasing property $M(W^{i,T},c)\le M(W,c)$, it preserves
membership of the classes $A(n,m)$ and $A^*(n,m)$, and if $F^{i,T}$ is the
edge function of $W^{i,T}$, then $F^{i,T}(i)=F(i)$ is an augmented complete 
subgraph of
the associated graph $G^{i,T}$, but might not be forward invariant
in $G^{i,T}$. 

\subsection{Graphical conventions} We continue the graphical conventions
introduced in \cite{BOF}.
Thus, we do not show loops or
the internal edges in a $\hat K_r$, and a single arrow issuing from
$\hat K_r$ represents $r$ edges, one from each vertex in the
$\hat K_r$, all sharing the same target. If several arrows  
from a $\hat K_r$ point to some $\hat K_s$, then distinct
arrows have distinct targets (so the number of arrows will not
exceed $s$). For instance, Figure \ref{F:2-K-6-3} shows 
three views of a $\hat K_6$.
\begin{figure}[h]
        \begin{center}
                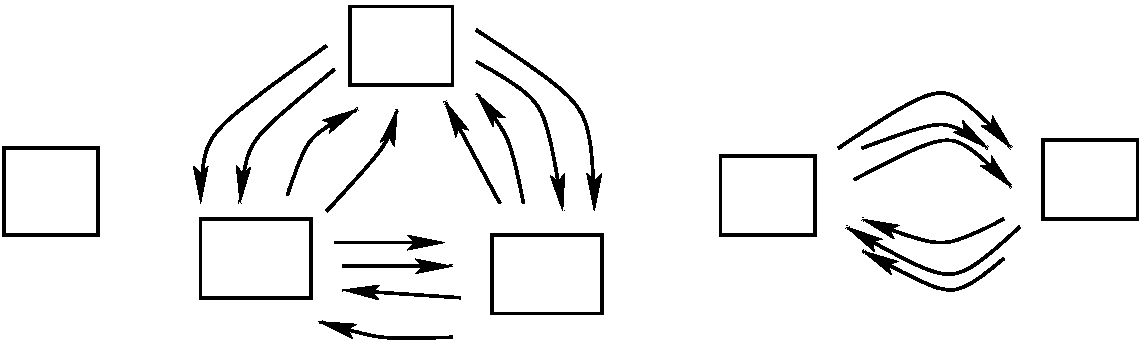
		\caption{Views of $\hat K_6$}\label{F:2-K-6-3}
        \end{center}
\end{figure}
Notice how the $36$ directed edges of the $\hat K_6$
are hidden to varying degrees in this figure, and how the
arrows represent multiple edges --- two each in the 
version with $\hat K_2$s, and three each in the version
with $\hat K_3$s. To reduce clutter
further, we introduce the additional convention that
an two-headed arc stands for a pair of arrows, one
in each direction.  This gives the two more views
of $\hat K_6$ shown in Figure \ref{F:2-K-6-2}
\begin{figure}
	\begin{center}[h]
                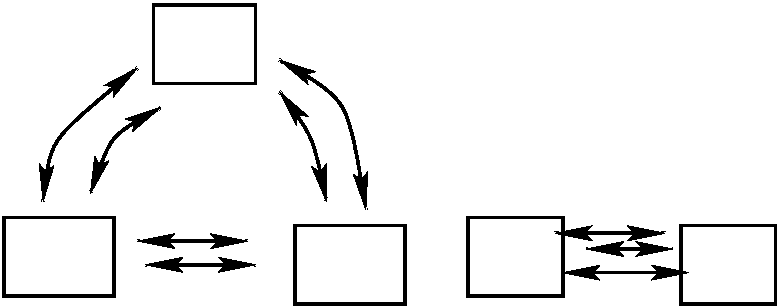
		\caption{More views of $\hat K_6$}\label{F:2-K-6-2}
        \end{center}
\end{figure}
in which individual two-headed arcs stand for up to six
directed edges in the $\hat K_6$.

\section{The case $(n,m)=(2^{k+1}-1,2^k)$} \label{S:m=2^k}

\subsection{}
We begin with some observations for general $n,m$ that will be useful here
or in later sections. Trivially, $\mu(n,m)$ is nonincreasing as a function
of $m$, but it is also easy to see that it is also nondecreasing as a
function of $n$: given a wiring $W\in A(n,m)$ such that $|Wx|\le\mu(n,m)$
for all $x\in\F_2^n$, it may be that vertex $n$ has degree $1$, in which
case it is clear that if $W'$ is obtained by eliminating the last row and
column of $W$, then $|W'x'|\le\mu(n,m)$ for all $x'\in\F_2^{n-1}$.

If instead vertex $n$ has degree larger than $1$ then, by pivoting if
necessary, we may assume that vertex $n$ forms a part of a forward invariant
$\hat K_j$ for some $j>1$. Because the effect of pressing vertex $n$ is the same
as the effect of pressing any other vertex in the $\hat K_j$, the set of vectors
$Wx$, as $x=(x_1,\dots,x_n)^t$ ranges over all vectors in $\F_2^n$ for which
$x_n=0$, coincides with the set of vectors $Wx$ as $x$ ranges over all of
$\F_2^n$. It follows that if we define $W'$ as in the previous case, then
$|W'x'|\le\mu(n,m)$ for all $x'\in\F_2^{n-1}$.

\textbf{In contrast, we do not know whether or not $\mu^*(n,m)$ is an nondecreasing
function of $n$.}

\subsection{}
Another easily proven inequality is:
\begin{equation}\label{E:+1}
\mu(n+1,m)\le \mu(n,m)+1\,.
\end{equation}
To see this, we need only consider the matrix $W\in A(n+1,m)$ which has
block diagonal form $\diag(W',I_1)$, where $W'\in A(n,m)$ satisfies
$M(W',0)=\mu(n,m)$.

\subsection{}
We now prove a pair of closely related lemmas. We will only use the second
one in this section, but we will need the first one later.

\begin{lem}\label{L:reflect}
Let $m,m',n\in\N$ and $m\le n$. Then
\begin{align*}
\mu(nm',mm')&\le m'\mu(n,m) \\
\mu^*(nm',mm')&\le m'\mu^*(n,m)
\end{align*}
\end{lem}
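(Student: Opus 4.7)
The plan is to prove both inequalities by a single block-replacement construction. Starting from an optimal $W \in A(n,m)$ with $M(W,0) = \mu(n,m)$, I would form the $nm' \times nm'$ matrix $W^*$ by replacing each scalar entry $W_{ij}$ by the $m' \times m'$ block $W_{ij}\cdot 1_{m'}$, i.e., by $1_{m'}$ if $W_{ij}=1$ and by $0_{m'}$ otherwise. (This is the Kronecker product $W \otimes 1_{m'}$.)

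First I would check membership. The diagonal $m'\times m'$ blocks of $W^*$ are $1_{m'}$, so $W^*$ has $1$s along its diagonal; and each column of $W^*$ has Hamming weight equal to $m'$ times the weight of the corresponding column of $W$, so $\deg(W^*) \le m'm$, whence $W^* \in A(nm', mm')$. Moreover, if $W \in A^*(n,m)$ then every column of $W$ has weight exactly $m$, so every column of $W^*$ has weight exactly $mm'$, giving $W^* \in A^*(nm', mm')$.

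The main step is the estimate $M(W^*, 0) \le m'\mu(n,m)$. Given $y \in \F_2^{nm'}$, I would split $y$ into $n$ blocks $y_j \in \F_2^{m'}$ and let $s_j := \sum_{k=1}^{m'} (y_j)_k \in \F_2$, so that $s := (s_1,\ldots,s_n)^t \in \F_2^n$ is the vector of block parities. Since $1_{m'} y_j$ is the constant $\F_2^{m'}$-vector with every entry equal to $s_j$, a short block calculation shows that the $i$-th block of $W^* y$ is the constant vector of length $m'$ with every entry equal to $(Ws)_i$. Hence
$$ |W^* y| \;=\; m'\,|Ws| \;\le\; m'\, M(W,0) \;=\; m'\,\mu(n,m)\,. $$
Taking the maximum over $y \in \F_2^{nm'}$ gives $\mu(nm', mm') \le M(W^*,0) \le m'\mu(n,m)$. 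The second inequality follows by applying the identical argument to an optimal $W \in A^*(n,m)$, using the stronger membership statement $W^* \in A^*(nm', mm')$ verified above.

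Main obstacle: There is no serious obstacle once the construction is identified. The only real content is the observation that tensoring with the all-ones block $1_{m'}$ makes $W^*$ act on an arbitrary $y$ exactly as $W$ acts on the parity vector $s$, with the Hamming norm scaling by a factor of exactly $m'$; everything else reduces to bookkeeping.
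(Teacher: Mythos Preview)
Your proposal is correct and uses essentially the same Kronecker-product construction $W\otimes 1_{m'}$ as the paper. Your explicit block-parity computation showing $|W^*y|=m'|Ws|$ is a slightly more detailed justification of the key identity $M(W',0)=m'M(W,0)$, which the paper describes in graph-theoretic terms (banks of $m'$ synchronously-switched bulbs) and then declares clear.
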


\begin{proof}
Essentially the same proof works for $\mu$ and $\mu^*$, so we write down
only the one for $\mu$. Let $W\in A(n,m)$ be such that $M(W,0)=\mu(n,m)$. We
construct a new matrix $W'$ by replacing each entry $w_{i,j}$ in $W$ by an
$m'\times m'$ block, each of whose entries is $w_{i,j}$, i.e.~$W'$ is the
Kronecker product $W\bigotimes 1_{m'\times m'}$.
It is readily
verified that $W\in A(nm',mm')$.

The graph of $W$ is obtained by replacing each vertex $j$ in the original graph
$G$ by $m'$ new vertices which we will label $(j,j')$, $1\le j'\le m'$. Pressing
vertex $(j,j')$ changes the status of some other vertex $(i,i')$ if and only
if pressing $j$ changes the status of vertex $i$ in the original graph. 
In the new wiring $W'$, each bulb of $W$ has been replaced by a bank of
$m'$ bulbs, all of which are switched synchronously by
any of their associated switches and
it is clear that $M(W',0)=m'M(W,0)$.
\end{proof}

Figure \ref{F:2-1} illustrates the proof
that $\mu^*(18,9)\le 3\mu^*(6,3) (=12)$, i.e.~the
case $n=6$, $m=3$, $m'=3$.  
\begin{figure}[h]
	\begin{center}
		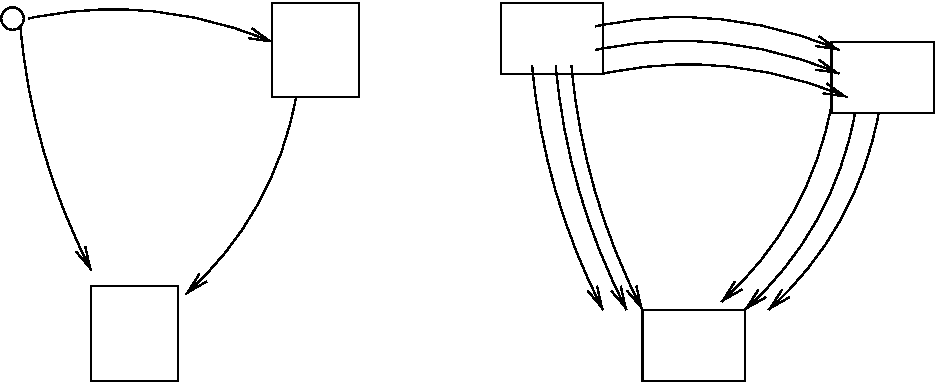
		\caption{$W$ and $W'$}\label{F:2-1}
	\end{center}
\end{figure}
The graph $W$ is the graph from Figure 
12 in \cite{BOF},
the wiring example which concludes the proof
that $\mu^*(6,3)=4$. 
To construct $W'$, each vertex of $W$
has been replaced by a $\hat K_3$ and each edge by three 
edges, one to each vertex of the $\hat K_3$ 
that replaces the original target. Thus, the original
$\hat K_2$ and $\hat K_3$ become a $\hat K_6$
and a $\hat K_9$, respectively. In terms of bulbs and switches,
each bulb becomes a bank of $3$ bulbs, and each
switch a bank of $3$ switches, all having the same effect.

It is also possible to view the new wiring 
$W'$ as a row of $m'$ copies
of $W$, suitably wired together, and when we 
think of it in this way we refer to the copies as
\emph{clones} of $W$. Figure \ref{F:2-2} shows 
this view of the above example. The view
in Figure \ref{F:2-2} is comparatively cluttered,  
but it is still substantially less messy than the full wiring
graph, which has $162$ directed edges.
\begin{figure}
	\begin{center}
		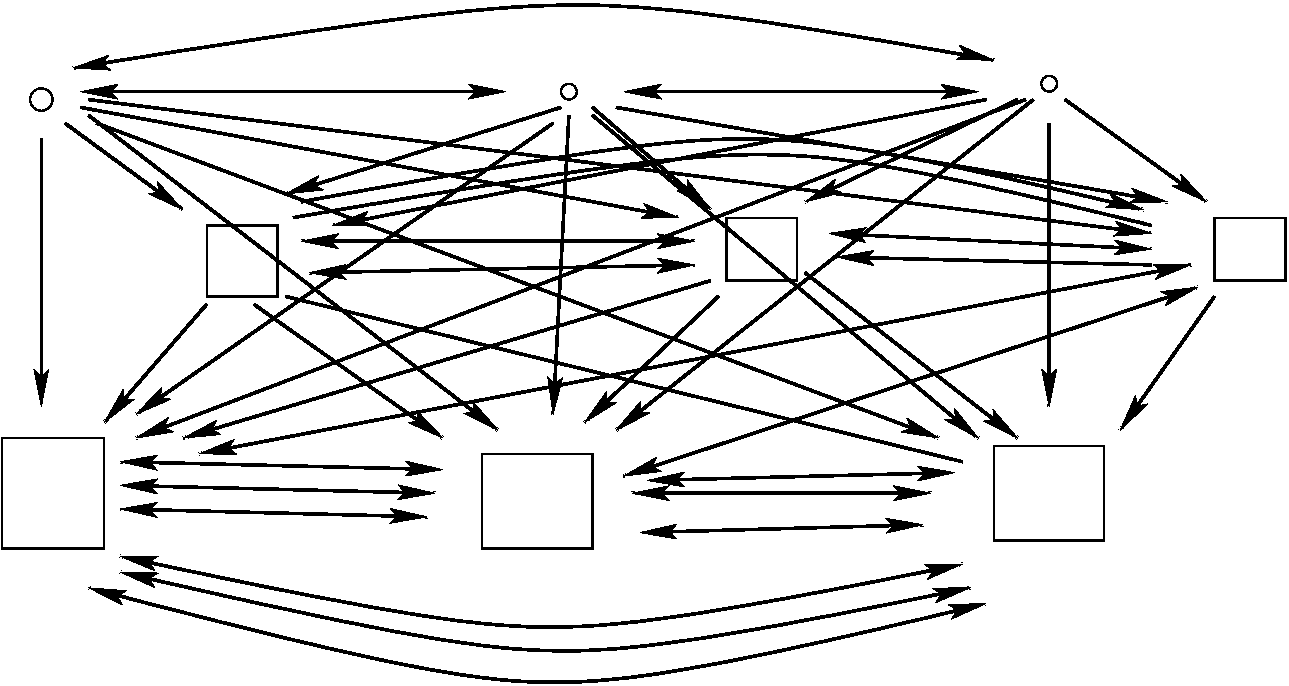
		\caption{$W'$}\label{F:2-2}
	\end{center}
\end{figure}

\begin{lem}\label{L:2n+1}
Let $n,m,m'\in\N$ with $m'm\ge n+1$. Then $\mu(m'n+1,m'm)\le m'\mu(n,m)$.
\end{lem}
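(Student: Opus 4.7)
The plan is to reuse the Kronecker cloning of \rf{L:reflect} and then carefully adjoin a single extra vertex. I start with $W\in A(n,m)$ realizing $\mu(n,m)=M(W,0)$, form the cloned wiring $W'=W\otimes 1_{m'\times m'}\in A(m'n,m'm)$ (so $M(W',0)=m'\mu(n,m)$), and label its vertices $(i,i')$ with $i\in S(n)$, $i'\in S(m')$. I then adjoin a single new vertex $v$, choosing column $v$ of $W''$ to have $1$s at the diagonal entry and at the $n$ ``representative'' positions $(1,1),(2,1),\dots,(n,1)$, and choosing row $v$ to be zero off the diagonal. The hypothesis $m'm\ge n+1$ ensures that the column degree of $v$ (which is $n+1$) fits under the cap, and since row $v$ has no off-diagonal $1$s, every other column degree is inherited unchanged from $W'$; with the correct diagonal this gives $W''\in A(m'n+1,m'm)$.

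The core computation evaluates $|W''x|$ for arbitrary $x=(y,t)\in\F_2^{m'n}\times\F_2$. The Kronecker structure of $W'$ gives $(W'y)_{(i,i')}=\zeta_i$, where $\zeta:=W\bar y$ and $\bar y_i:=\sum_{i'=1}^{m'}y_{(i,i')}\pmod 2$ depends only on $i$. Incorporating the effects of column $v$ and row $v$ yields
\[
(W''x)_{(i,i')}=\zeta_i+t\cdot\mathbf{1}[i'=1],\qquad (W''x)_v=t.
\]
Counting $1$s gives $|W''x|=m'|\zeta|$ when $t=0$, and $|W''x|=(n-|\zeta|)+(m'-1)|\zeta|+1=n+(m'-2)|\zeta|+1$ when $t=1$.

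The case $t=0$ is immediate from $|\zeta|\le M(W,0)=\mu(n,m)$. The main obstacle is the $t=1$ case: assuming $m'\ge 2$ (the only substantive range, as $m'=1$ would force the untrue conclusion $\mu(n+1,m)\le\mu(n,m)$), the right-hand side is nondecreasing in $|\zeta|$, so the worst case reduces to verifying $n+1\le 2\mu(n,m)$. This is exactly the strict inequality supplied by \rf{L:mean} applied with $c=0$: because $|c|=0\ne n/2$ for $n\ge 1$, we get $M(W,0)>n/2$, and integrality upgrades this to $2\mu(n,m)\ge n+1$. Substituting back, $n+(m'-2)|\zeta|+1\le(m'-2)\mu(n,m)+2\mu(n,m)=m'\mu(n,m)$. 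Hence $M(W'',0)\le m'\mu(n,m)$, and the lemma follows. The design of column $v$ with exactly one $1$ per clone block is chosen precisely so that this single inequality $2\mu(n,m)\ge n+1$ is the only thing to check, and \rf{L:mean} supplies it with no slack to spare.
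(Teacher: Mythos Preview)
Your proof is correct and follows essentially the same route as the paper: form $W'=W\otimes 1_{m'\times m'}$, adjoin a single vertex $v$ linked to one representative in each of the $n$ clone banks, split on whether $v$ is pressed, and close the $t=1$ case via $n+1\le 2\mu(n,m)$ from \rf{L:mean}. The only cosmetic differences are that the paper picks the representative $i'=m'$ rather than $i'=1$ and argues the pressed case combinatorially (partitioning each bank into pieces of sizes $m'-2$ and $2$) where you compute $|W''x|=n+(m'-2)|\zeta|+1$ explicitly; your remark that the argument needs $m'\ge2$ is accurate and is implicit in the paper's partition step as well.
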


\begin{proof}
Let $W\in A(n,m)$ be such that $M(W,0)=\mu(n,m)$. As in the previous lemma,
we construct a new matrix $W'=W\bigotimes 1_{m'\times m'}$. The wiring $W'$
is a wiring for $m'n$ vertices which can be split into $n$ banks of $m'$
vertices that are always in sync (either all on or all off). We add one
last vertex $v$ and get a new wiring by connecting $v$ to itself and to one
vertex from each of the $m'$ sets of clones. In terms of matrices, this
can be achieved by defining a matrix with block form
\begin{equation} \label{E:W''}
W'' =
  \begin{pmatrix}
  W' & V \\ 0_{1\times m'n} & I_1
  \end{pmatrix}
\end{equation}
where $V=(v_i)$ is a $m'n\times1$ column vector with $v_i=1$ if $i$ is a
multiple of $m'$, and $v_i=0$ otherwise.
Using the inequality $m'm\ge n+1$, it is readily verified
that $W\in A(m'n+1,m'm)$.

If we do not press $v$, then it is clear (as in the previous proof) that we
can light at most $m'M(W,0)=m'\mu(n,m)$. Suppose therefore that we press $v$
(together with some combination of other vertices). Partitioning each set of
$m'$ clones into two subsets $S'$ and $S''$, where $S''$ has
cardinality $2$ and includes the vertex which is toggled by $v$, it is clear
that all vertices in each of the $S'$ sets remain in sync, that precisely
one vertex in each $S''$ is lit, and that $v$ itself is lit. Thus, we can
light at most $(m'-2)\mu(n,m)+n+1$ if $v$ is pressed. Since we know from
\rf{L:mean} that $\mu(n,m)=M(W,0)>n/2$, we have $n+1\le 2\mu(n,m)$, and so
$(m'-2)\mu(n,m)+n+1\le m'\mu(n,m)$, and we are done.
\end{proof}

\subsection{}
We now state our first main result for $m$ close to a power of $2$.

\begin{thm}\label{T:m=2^k} For all $k\in\N$, and $m\ge 2^k$,
$$ \mu(2^{k+1}-1,m) = \mu^*(2^{k+1}-1,2^k) = 2^k\,. $$
\end{thm}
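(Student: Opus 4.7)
For the lower bound, \rf{L:mean} gives $M(W, 0) \ge n/2$ for every admissible wiring on $n = 2^{k+1} - 1$ vertices, and since $M(W, 0)$ is an integer this forces $M(W, 0) \ge 2^k$. Hence $\mu(n, m) \ge 2^k$ for every $m \ge 1$, and since $\mu(n, m) \le \mu^*(n, m)$ we also get $\mu^*(n, 2^k) \ge 2^k$. Combined with the fact that $\mu(n, \cdot)$ is nonincreasing in $m$ (noted at the start of this section), the theorem reduces to producing a single wiring $W \in A^*(2^{k+1}-1, 2^k)$ with $M(W, 0) \le 2^k$. My plan for constructing such a $W$ uses the ``core'' of a Sylvester Hadamard matrix.

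To construct $W$, I would index rows and columns by $V := \F_2^{k+1} \setminus \{0\}$, so that $|V| = n$, and first define the $V \times V$ matrix $H$ over $\F_2$ by $H_{u,v} := u \cdot v$, the standard bilinear form. This $H$ is essentially the $(N-1) \times (N-1)$ core of the Sylvester Hadamard matrix of order $N = 2^{k+1}$ under the usual identification $\{\pm 1\} \leftrightarrow \{0, 1\}$. Two easy computations capture its key properties: each column of $H$ has weight $|\{u \in V : u \cdot v = 1\}| = 2^k$ (half of $\F_2^{k+1}$ satisfies $u \cdot v = 1$, and the excluded vector $0$ is not among them); and for any $y \in \F_2^n$ with support $T \subseteq V$, the $u$-th entry of $Hy$ equals $\sum_{v \in T}(u \cdot v) = u \cdot y'$, where $y' := \sum_{v \in T} v \in \F_2^{k+1}$. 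This vanishes if $y' = 0$, and otherwise equals $1$ for exactly $2^k$ vectors $u \in V$. Thus $|Hy| \in \{0, 2^k\}$ for every $y$, so $M(H, 0) = 2^k$.

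The only defect of $H$ is its diagonal: $H_{v,v} = v \cdot v$ is the parity of the Hamming weight of $v$, and so vanishes for even-weight $v$. I would repair this by permuting rows. The bipartite graph on two copies of $V$ with edges $(u, v)$ whenever $u \cdot v = 1$ is $2^k$-regular on both sides, so Hall's theorem (applied to any regular bipartite graph) provides a permutation $\sigma$ of $V$ with $\sigma(v) \cdot v = 1$ for all $v$. Setting $W_{v, v'} := H_{\sigma(v), v'}$ yields a matrix with $1$'s on the diagonal, with all column weights still equal to $2^k$, and with $|Wy| = |Hy|$ for every $y$, since row permutation preserves Hamming weight. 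Hence $W \in A^*(2^{k+1}-1, 2^k)$ with $M(W, 0) = 2^k$, completing the proof. The only mild obstacle is producing the diagonal $1$'s; everything else is straightforward character-sum bookkeeping.
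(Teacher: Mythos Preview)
Your argument is correct and, at the level of the underlying construction, essentially the same as the paper's: both build the core $H$ of the Sylvester--Hadamard matrix, observe that all columns have weight $2^k$ and that any $\F_2$-combination of columns again has weight $0$ or $2^k$, and then repair the diagonal.  Where you diverge is in the repair step.  The paper replaces each column of $H$ by a duplicate of some column that happens to have a $1$ in the right position (concretely, column $i$ is set equal to column $2^j$ for $2^j\le i<2^{j+1}$), producing the explicit ``tower'' wiring $V_k$; since $V_k x = H P x$ for a projection $P$, the weight bound is inherited.  You instead permute the \emph{rows} of $H$, invoking Hall's theorem on the $2^k$-regular bipartite graph $\{(u,v): u\cdot v=1\}$ to place $1$'s on the diagonal while preserving column weights and Hamming norms of images.

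Both fixes are valid; yours is a touch cleaner in that it dispatches $\mu$ and $\mu^*$ simultaneously with one wiring and keeps all columns distinct, whereas the paper first handles $\mu(2^{k+1}-1,2^k)\le 2^k$ separately by induction via \rf{L:2n+1} before turning to $\mu^*$.  On the other hand, the paper's column-duplication yields the concrete tower $V_k$ used downstream (Subsection~\ref{SS:towers} and \rf{T:n=q(2m-1)}), and the paper in fact remarks that row/column permutations of $W_k$ give alternative admissible wirings---so your Hall-matching idea is in the spirit of those alternatives, just with the existence of the permutation justified abstractly rather than exhibited.
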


\subsection{}
Using this theorem, it is easy to deduce \rf{T:lim}, i.e.~$\lim_{n\to\infty}
\mu(n)/n= 1/2$:

\begin{proof}[Proof of \rf{T:lim}]
\rf{L:mean} implies that $\liminf_{n\to\infty} \mu(n)/n\ge 1/2$, so it
suffices to show that $\limsup_{n\to\infty} \mu(n)/n\le 1/2$. Fixing
$k\in\N$, let us assume that $n>p:=2^{k+1}-1$. We write $n=ap+r$, where
$a\in\N$ and $0\le r\le p-1$. By inequality \rf{E:sublinear}, \rf{T:m=2^k}, and the
fact that $\mu(\cdot,\cdot)$ is nondecreasing in its first argument and
nonincreasing in its second, we see that
$$
\mu(n) \le \mu(n,2^k) \le a\mu(p,2^k)+\mu(r,2^k) \le (a+1)2^k\,.
$$
Letting $n\to\infty$, it follows easily that
$\limsup_{n\to\infty}\mu(n)/n\le 2^k/(2^{k+1}-1)$. Since $k$ can be chosen
to be arbitrarily large, it follows that $\limsup_{n\to\infty} \mu(n)/n\le
1/2$, as required.
\end{proof}

\subsection{Sylvester-Hadamard matrices}
Before proving \rf{T:m=2^k}, we need to discuss the Sylvester-Hadamard
matrices, which are defined as follows:
$$
H_2 =
  \begin{pmatrix}
  1 &  1 \\
  1 & -1
  \end{pmatrix}
$$
and inductively $H_{2^k}$ is given in block form by
$$
H_{2^k} =
  \begin{pmatrix}
  H_{2^{k-1}} &  H_{2^{k-1}} \\
  H_{2^{k-1}} & -H_{2^{k-1}}
  \end{pmatrix}\,.
$$
Equivalently, $H_{2^k}$ is the Kronecker product $H_2\bigotimes H_{2^{k-1}}$.

Let $h$ be the rescaled Haar function given by $h(t)=1$ if $\lfl t \rfl$ is
even and $h(t)=-1$ otherwise. Let $h_p(t)=h(2^{-p}t)$ for all $p\in\N$,
so that each function $h_p$ is periodic. It is straightforward to verify
that for fixed $k\in\N$ and $1\le j\le 2^k$, the $j$th column
$(a_{i,j})_{i=1}^{2^k}$ of $H_{2^k}$ is always given by a pointwise product
of one or more of the column vectors $(h_p(i-1))_{i=1}^{2^k}$, $1\le p\le
k$, and that any such product gives some column of $H_{2^k}$. It follows
that a pointwise product of any number of the columns of $H_{2^k}$ is
another column of $H_{2^k}$, a fact that will be useful in the following
proof.

\subsection{Proof of \rf{T:m=2^k}}
\begin{proof}
By \rf{L:mean} and the fact that $\mu(\cdot,\cdot)$ is nonincreasing in its
second argument, we have that $\mu^*(2^{k+1}-1,2^k)\ge\mu(2^{k+1}-1,m)\ge
2^k$. Conversely, by taking $n=2^{j+1}-1$, $m=2^j$, and $m'=2$ in
\rf{L:2n+1}, we deduce inductively $\mu(2^{k+1}-1,2^k)\le 2^k$, and so
$\mu(2^{k+1}-1,m)\le 2^k$.

It remains to get the same upper bound for $\mu^*(2^{k+1}-1,2^k)$. For this,
	we need to work a little harder. Fix $k$ and let $n=2^{k+1}-1$.
	We claim that if we delete the first row
and column of the Sylvester-Hadamard matrix $H_{2^{k+1}}$, and change each
$1$ entry to a $0$ and each $-1$ to a $1$, then we get an $n\times n$
	matrix $W=W_k$ over $\F_2$ such that each column of $W$
	has exactly $2^k$ ones, and such that the pointwise
	sum of any two columns of $W$ is another column of $W$ or is
	a column of zeros.

The fact that any
pointwise product of columns of $H_{2^{k+1}}$ is another column of that same
matrix means that any pointwise product of columns of $H_{2^{k+1}}$ has
either zero or $2^k$ entries equal to $-1$. Pointwise products for
$H_{2^{k+1}}$ correspond to pointwise sums mod $2$ for $W$, so the claim
	is established.

	It follows from the claim that for each $x\in\F_2^n$, 
	the vector $Wx$ is some column of $W$, so $|Wx|=2^k$ or $0$.

For $k\in\N$, the graph with matrix $W_k$ does not have a loop at each vertex,
i.e. it corresponds to an inadmissible wiring.
But if $V$ is any matrix all of whose columns are columns of $W_k$,
and which has only $1$'s on the diagonal, then 
$V\in A^*(n,2^k)$ and for each $x\in\F_2^n$
we have $|Vx|=2^k$ or $0$ (because $Vx=WPx$ for some projection $P$), 
so we deduce that 
$M(V,0)=2^k$.  The simplest way to construct such a matrix $V$
from $W$ is to repeat columns $1$,$2$,$4$ and so on, respectively,
once, twice, four times, etc. In other words, take column $i$ of
	$V$ equal to column $2^j$ of $V$ whenever $2^j\le i< 2^{j+1}$.
This concludes the proof.
\end{proof}

\subsection{Towers $V_k$}\label{SS:towers}
The matrix $V=V_k$ in the foregoing proof has the property that
the nonzero entries occur in blocks that are of the form
$1_{r\times r}$, where $r$ runs through powers of $2$.
Graphically, this wiring $V$ corresponds to a tower of
$k+1$ augmented complete graphs, one of degree equal to each 
power of $2$, as illustrated (sideways on) in Figure \ref{F:2-5}.
\begin{figure}
	\begin{center}
		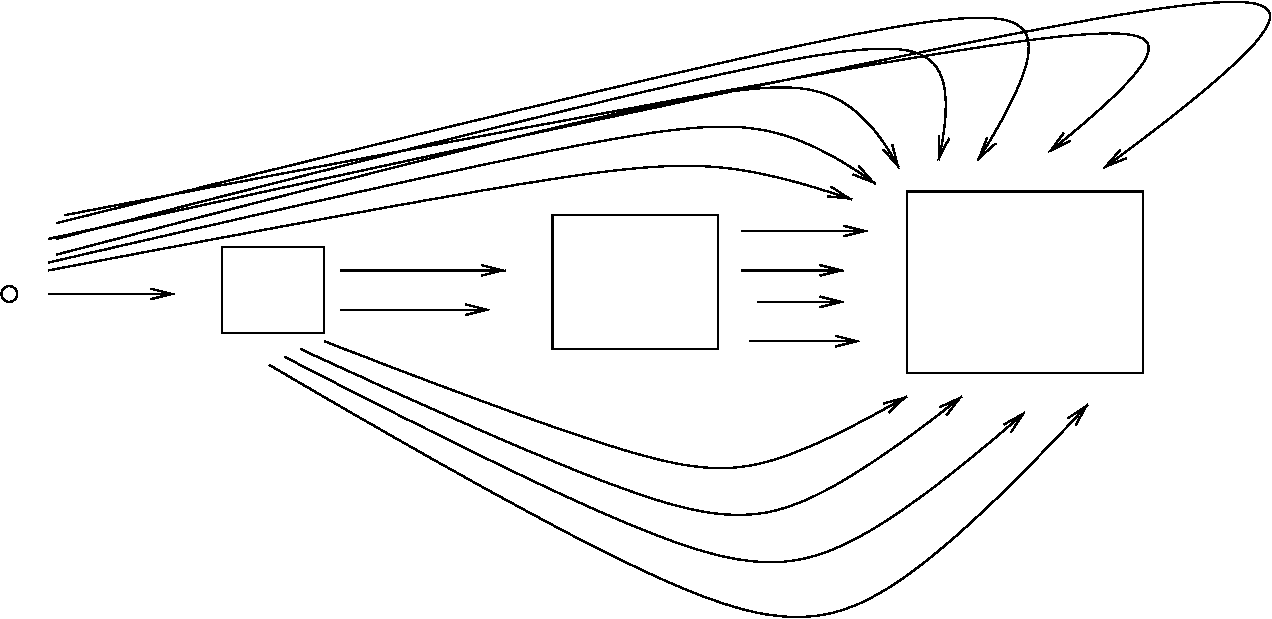
		\caption{$V_3$}\label{F:2-5}
	\end{center}
\end{figure}
The corresponding matrix is
$$\left(\begin{array}{rrrrrrrrrrrrrrr}
	1&0&0&0&0&0&0 &0&0&0&0&0&0&0&0
	\\
	0&1&1&0&0&0&0    &0&0&0&0&0&0&0&0
	\\
	1&1&1&0&0&0&0    &0&0&0&0&0&0&0&0
	\\
	0&0&0&1&1&1&1     &0&0&0&0&0&0&0&0
	\\
	1&0&0&1&1&1&1     &0&0&0&0&0&0&0&0
	\\
	0&1&1&1&1&1&1     &0&0&0&0&0&0&0&0
	\\
	1&1&1&1&1&1&1    &0&0&0&0&0&0&0&0
	\\
	0&0&0&0&0&0&0 &1&1&1&1&1&1&1&1
	\\
	1&0&0&0&0&0&0 &1&1&1&1&1&1&1&1
	\\
	0&1&1&0&0&0&0 &1&1&1&1&1&1&1&1
	\\
	1&1&1&0&0&0&0&1&1&1&1&1&1&1&1
	\\
	0&0&0&1&1&1&1 &1&1&1&1&1&1&1&1
	\\
	1&0&0&1&1&1&1&1&1&1&1&1&1&1&1
	\\
	0&1&1&1&1&1&1 &1&1&1&1&1&1&1&1
	\\
	1&1&1&1&1&1&1&1&1&1&1&1&1&1&1
\end{array} \right)
$$
	
The transition from $W_k$ to $V_k$ in the proof can be described by
a sequence of pivots: A first pivot produces a forward-invariant $\hat K_{2^k}$,
then a partial pivot with respect to the $\Hat K_{2^k}$
produces a $\hat H_{2^{k-1}}$, and so on.  The process converts
a rather symmetrical inadmissible graph into an asymmetric admissible
tower.  Alternative constructions that amount to multiplying
$W_k$ by a permutation matrix convert the inadmissible graph
to a symmetric admissible graph without
a proper forward-invariant subgraph. Figure \ref{F:2-6} 
shows an example, obtained by permuting the columns of
$V_2$ to the order $(1,2,5,6,3,4,7)$ (As usual, the loops
at the vertices are not shown.) This could be illustrated
rather prettily on a regular tetrahedron by placing $1$
at the apex, the $2,4,6$ as the vertices of the base
triangle, and placing the remaining three points on the 
edges halfway up, with $5$ on the edge $1-2$, $7$ on $1-4$,
and $3$ on $1-6$.
All the arrows can then be drawn on 
faces of the tetrahedron.
\begin{figure}[h]
	\begin{center}
		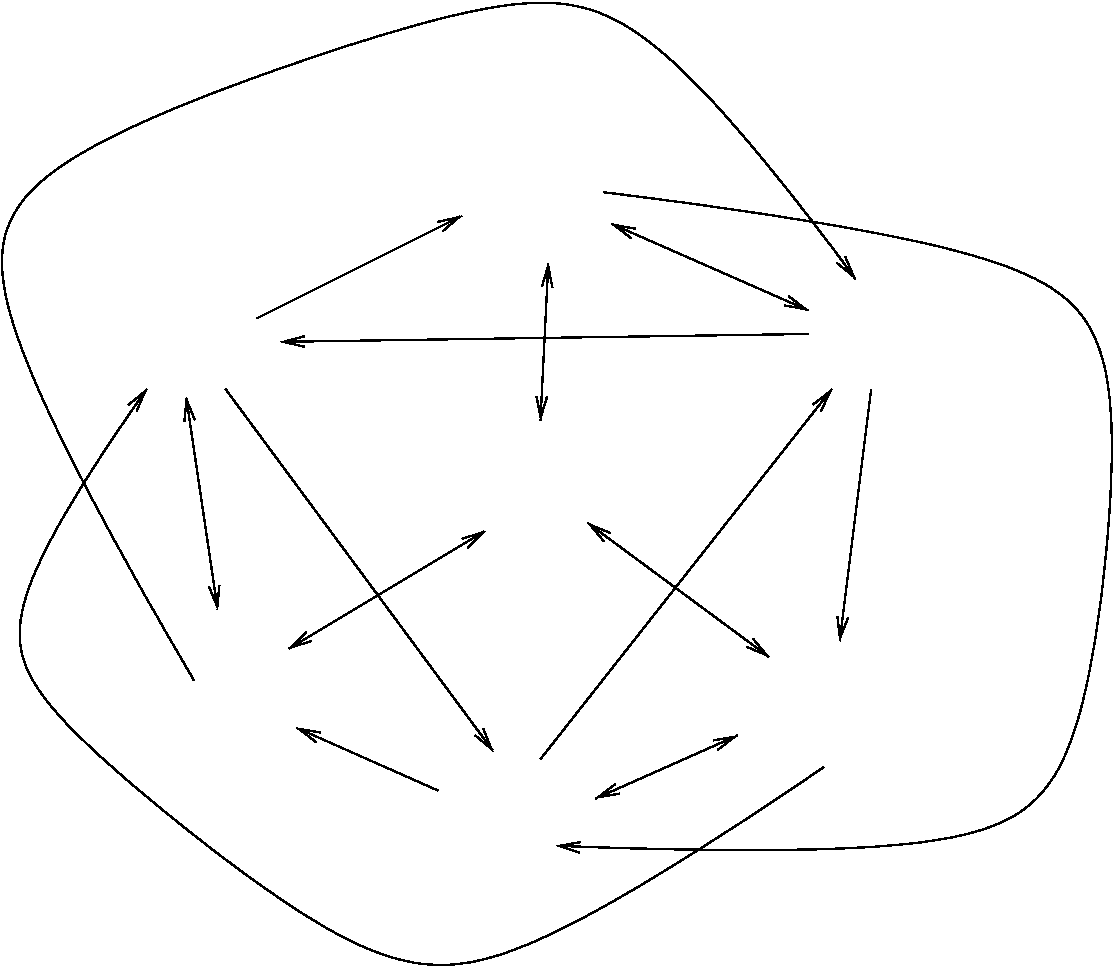
		\caption{An alternative $V$ for $k=2$}\label{F:2-6}
	\end{center}
\end{figure}

\subsection{Remark}
Note that there are Hadamard matrices $H_{4n}$ of dimension $4n$ for many
$n\in\N$, not just powers of $2$; in fact, they are conjectured to exist for
all dimensions $4n$ \cite{Sloane-Hadamard}. 
Since by definition the rows of an Hadamard matrix are
pairwise orthogonal, one might wish to use $H_{4n}^t$ as we used the
symmetric Sylvester-Hadamard matrices. However, this is not possible for
several reasons: we do not in general have a complete row and column of $1$s
suitable for deleting (although there is always an equivalent Hadamard
matrix with this property), there may not be $-1$s along the diagonal of an
associated minor, and some pointwise products of more than two columns of
$H_{4n}$ may have more than $2n$ entries equal to $-1$ (even if $n$ is a
power of $2$). For instance, in the Paley-Hadamard matrix
$$
P =
  \(\begin{array}{rrrrrrrr}
  1 & 1 & 1 & 1 & 1 & 1 & 1 & 1 \\-1 & 1 &-1 &-1 & 1 &-1 & 1 & 1 \\
 -1 & 1 & 1 &-1 &-1 & 1 &-1 & 1 \\-1 & 1 & 1 & 1 &-1 &-1 & 1 &-1 \\
 -1 &-1 & 1 & 1 & 1 &-1 &-1 & 1 \\-1 & 1 &-1 & 1 & 1 & 1 &-1 &-1 \\
 -1 &-1 & 1 &-1 & 1 & 1 & 1 &-1 \\-1 &-1 &-1 & 1 &-1 & 1 & 1 & 1 \\
  \end{array}\)\,,
$$
the pointwise product of columns 2, 3, and 5 contains all $-1$s, except
from the first entry. For all
these reasons, the method for Sylvester-Hadamard matrices does not in other
cases produce a $W\in A(4n-1,2n)$, let alone $W$ such that $M(W,0)=2n$.

\subsection{Codes}
Hadamard matrices generate Hadamard codes, which have a certain optimality
property. Recall that the code associated with $H^{2^k}$ has $2^{k+1}$
codewords that make up a group $G<(\F_2^{2^k},+)$. The above wiring
$W\in A^*(2^k-1,2^{k-1})$ can by constructed from the code as follows.
First, let $H<G$ be the order 2 subgroup generated by $(1,\dots,1)^t\in G$, and
select the element in each coset of $H$, other than $H$ itself, that has a
$0$ in the first coordinate. Discarding the first coordinate of each
selected codeword yields a set of projected codewords that give the columns
of $W$.

It would be interesting to know if there are any further connections between
optimal codes and optimal wirings. There is a reason to expect that
(near-)optimal linear codes may be associated with (near-)optimal
wirings: a near-optimal linear code is one in which the minimum over all
codewords $w$ of the Hamming distance $|w|$ is about as large as possible, so if
we take many of these codewords as the columns of the wiring matrix (perhaps
after discarding one or more coordinates, as we did for Hadamard codes), we
get a matrix for which $|Wx|$ is fairly large, except for the relatively few
times when $Wx=0$. A relatively large minimum nonzero value for $|Wx|$
should therefore be associated with a relatively small maximum value for
$|Wx|$, since \rf{L:mean} says that the average of $|Wx|$ over all
$x\in\F_2^n$ is $n/2$.

\section{An upper bound for $\mu(n,m)$} \label{S:U}
In this section, we establish an upper bound $U(n,m)$ for $\mu(n,m)$ in all
cases. This upper bound seems rather sharp, insofar as we know of no values
$n,m$ for which $\mu(n,m)$ and $U(n,m)$ differ. We also investigate $U(n,m)$
and a related nondecreasing sequence $(a(n))_{n=1}^\infty$ which we use to
define $U$.

\subsection{The sequence $a(n)$}
We first define $(a(n))$ by the following inductive process:
\begin{align*}
a(1) &= 1\,, \\
a(2^k-1+i) &= 2^{k-1}+a(i)\,, \qquad &1\le i\le 2^k-1,\;k\in\N\,, \\
a(2^{k+1}-1) &= 2^k\,, \qquad &k\in\N\,.
\end{align*}
Thus $(a(n))$ begins:
\begin{align*}
1,\;&2,\;2,\;3,\;4,\;4,\;4,\;5,\;6,\;6,\;7,\;8,\;8,\;8,\;8,\;9,\;10,\;10,\;11,\;
12,\;12,\;12,\;13,\;14,\;14,\;14,\\
&\;15,\;16,\;16,\;16,\;16,\;16,\;17,\dots
\end{align*}
It is not hard to verify that the above sequence has the following
alternative description: it is the nondecreasing sequence consisting of all
positive integers, where the frequency of each integer $n$ is the $2$-adic
norm of $2n$.

Note that $a(n)\le 2^k$ whenever $n\le 2^{k+1}-1$.

If we add an extra $1$ term to the beginning of the sequence $(a(n))$, we
get a sequence $(b(n))$ listed in the OEIS (Online Encyclopedia of Integer
Sequences) as A046699 \cite{OEIS}. The sequence $(b_n)$ is defined by the
initial conditions $b(1)=b(2)=1$, and the following recurrence relation:
$$ b(n) = b(n - b(n-1)) + b(n-1 - b(n-2))\,, \qquad{n>2}\,. $$
It can be deduced from this that $(a(n))$ satisfies the same recurrence
relation as $(b(n))$: we just need to modify the initial conditions. We
leave the verification of this to the reader, with the hint that it is
straightforward to deduce it from the two inequalities $a(n)>n/2$ and
$a(n+1)\leq a(n)+1$.

Such so-called {\it meta-Fibonacci sequences} go back to D.~Hofstadter
\cite[p.~137]{H}, and are generally considered to be rather mysterious.
Indeed, one of them was the subject of a \$10\,000 prize offered by the 
late J.~Conway
\cite{C2}. However $(a(n))$ and $(b(n))$ are clearly rather tame members of
this family, and one or other has appeared elsewhere in the context of
binary trees; see \cite{JR}, \cite{DR}, \cite{C1}, and \cite{E}.

\subsection{The function $U$}
Having defined $(a(n))$, we are now ready to define our upper bound function
$U$. Given positive integers $n$ and $m$, we choose the nonnegative
integer $k$ for which $2^k\le m<2^{k+1}$,
and we let $q$ and $r$ be the integers with
$n=(2^{k+1}-1)q+r$, with $q\ge 0$ 
and $1\le r<2^{k+1}$. Thus,  
$q$ and $r$ are the usual integral quotient and remainder when
$n$ is divided by $2^{k+1}-1$, except when the remainder is zero,
and in that case $r=2^{k+1}-1$ and $q=(n-r)/(2^{k+1}-1)$. 
We then define $U(n,m)=q2^k+a(r)$.

\ignore{
	AOF: EXPLANATION written out in my notes. 
}

Note that given $n$ and $m$, the integers $k,r,q$ so defined are unique,
and that $U(n,m)\ge a(n)$. Following our usual notation, we write
$U(n)=U(n,n)$, so that $U(n)$ is just an alternative notation for $a(n)$.

\begin{prop}\label{P:general m} We have $\mu(n,m) \le U(n,m)$ for all
$n,m\in\N$.
\end{prop}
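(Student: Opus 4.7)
The plan is to first reduce the inequality to a key claim about $\mu(r,m)$ when $r \le 2^{k+1}-1$, and then prove the key claim by strong induction mirroring the recursive definition of $a$. The crucial ingredients will be sublinearity (\rf{C:sublinear}) and the upper bound $\mu(2^{j+1}-1,m')\le 2^j$ for $m'\ge 2^j$ provided by \rf{T:m=2^k}.

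Fix $(n,m)$ and let $k$ be the unique integer with $2^k\le m<2^{k+1}$. Write $n=(2^{k+1}-1)q+r$ with $q\ge 0$ and $1\le r\le 2^{k+1}-1$, as in the definition of $U$. Iterating sublinearity and using $\mu(2^{k+1}-1,m)\le 2^k$ (which follows from \rf{T:m=2^k} since $m\ge 2^k$), one obtains
\[
\mu(n,m) \;\le\; q\,\mu(2^{k+1}-1,m) + \mu(r,m) \;\le\; q\cdot 2^k + \mu(r,m).
\]
Hence the proposition reduces to the \emph{key claim}: $\mu(r,m)\le a(r)$ for all $1\le r\le 2^{k+1}-1$ and all $m\ge 2^k$.

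To prove the key claim I would use strong induction on $r$. The base $r=1$ is trivial, since $\mu(1,m)=1=a(1)$. For $r\ge 2$, I would split into two sub-cases matching the recursion for $a$. If $r=2^{j+1}-1$ for some $1\le j\le k$, then $a(r)=2^j$ and \rf{T:m=2^k} directly gives $\mu(r,m)\le 2^j$ (using $m\ge 2^k\ge 2^j$). Otherwise, let $j=\lfloor \log_2 r\rfloor$, so $r=(2^j-1)+i$ with $1\le i\le 2^j-1$ and $a(r)=2^{j-1}+a(i)$ by the defining recursion. By sublinearity, \rf{T:m=2^k} (valid because $m\ge 2^k\ge 2^j>2^{j-1}$), and the inductive hypothesis on $i<r$,
\[
\mu(r,m) \;\le\; \mu(2^j-1,m)+\mu(i,m) \;\le\; 2^{j-1}+a(i) \;=\; a(r).
\]

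No genuine obstacle is anticipated. The exercise is essentially bookkeeping: matching the recursive structure of $a$ to the additive structure provided by sublinearity, and verifying at each invocation of \rf{T:m=2^k} that the required lower bound on $m$ is satisfied (which it is, since in the range considered the relevant exponent is always at most $k$, and $m\ge 2^k$).
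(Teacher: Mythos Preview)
Your argument is correct. Both proofs rest on sublinearity together with \rf{T:m=2^k}, but the induction schemes differ. The paper inducts on $k$ (the dyadic level of $m$): for $n<2^{k+1}-1$ it observes that any wiring containing a vertex of degree at least $2^k$ already has $M(W,0)\ge 2^k\ge U(n,m)$, so an optimal wiring must lie in $A(n,2^k-1)$, giving $\mu(n,m)=\mu(n,2^k-1)$ and reducing to the inductive hypothesis at level $k-1$; \rf{T:m=2^k} is then invoked only once, at scale $k$. You instead fix $m$, reduce to $r\le 2^{k+1}-1$, and do strong induction on $r$, invoking \rf{T:m=2^k} at every scale $j\le k$ to match the recursive definition of $a$ directly. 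Your route is slightly more elementary in that it avoids the degree observation; the paper's route yields the extra byproduct $\mu(n,m)=\mu(n,2^k-1)$ for $n<2^{k+1}-1$. (One cosmetic point: when $j=1$ you need $\mu(1,m)=1=2^0$, which is trivial but strictly speaking falls outside the $k\in\N$ range of \rf{T:m=2^k}; alternatively you could simply use the strong inductive hypothesis at $2^j-1<r$, since $a(2^j-1)=2^{j-1}$.)
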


\begin{proof}
The result is trivially true when $m=1$. We prove the result for $2^k\le
m<2^{k+1}$ by induction on $k$. Assuming $\mu(\cdot,m) \le U(\cdot,m)$ for
$2^{k-1}\le m<2^k$, we need to prove that this estimate also holds for
$2^k\le m < 2^{k+1}$. From now on, we assume that $2^k\le m<2^{k+1}-1$.

Sublinearity of $\mu(\cdot,m)$ and the inductive hypothesis gives
$\mu(n,2^k-1)\le 2^k$ for all $n<2^{k+1}-1$. Since any wiring with a vertex
of degree at least $2^k$ allows us to light at least $2^k$ vertices, we must
have
$$ \mu(n,m) = \mu(n,2^k-1) \le U(n,2^k-1) = U(n,m) $$
for $n<2^{k+1}-1$. If $n=2^{k+1}-1$, then \rf{T:m=2^k} yields
$$ \mu(n,m) = \mu(n,2^k) = 2^k = U(n,m)\,.$$
Finally, the required inequality follows readily for all $n\ge 2^{k+1}$ by
using the case $n<2^{k+1}$ and sublinearity of $\mu(\cdot,m)$. Thus, we have
proven the inductive step, and we are done.
\end{proof}

\subsection{Sublinearity}
Equation \rf{E:sublinear} says that $\mu(\cdot,m)$ is sublinear for all $m$.
We now prove the same for $U(\cdot,m)$

\begin{thm}\label{T:U sublinear} For all $n_1,n_2,m\in\N$, we have
$U(n_1+n_2,m) \le U(n_1,m) + U(n_2,m)$.
\end{thm}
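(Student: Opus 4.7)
The plan is to reduce the bound to subadditivity of the auxiliary sequence $(a(n))$. Fix $m$, let $k$ be the nonnegative integer with $2^k\le m<2^{k+1}$, and set $p:=2^{k+1}-1$. For $i=1,2$, write $n_i=pq_i+r_i$ with $q_i\ge 0$ and $1\le r_i\le p$, so that $U(n_i,m)=q_i2^k+a(r_i)$. Split into two cases. If $r_1+r_2\le p$, then $n_1+n_2=p(q_1+q_2)+(r_1+r_2)$ is already in canonical form, so $U(n_1+n_2,m)=(q_1+q_2)2^k+a(r_1+r_2)$, and the desired bound becomes $a(r_1+r_2)\le a(r_1)+a(r_2)$. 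If instead $r_1+r_2>p$, set $s:=r_1+r_2-p\in[1,p]$; then $n_1+n_2=p(q_1+q_2+1)+s$ gives $U(n_1+n_2,m)=(q_1+q_2+1)2^k+a(s)$, and applying the defining recurrence $a(2^{k+1}-1+i)=2^k+a(i)$ (valid for $1\le i\le 2^{k+1}-1$) with $i=s$ shows $a(r_1+r_2)=2^k+a(s)$. Thus the inequality again reduces to $a(r_1+r_2)\le a(r_1)+a(r_2)$.

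It therefore suffices to show that $a$ is subadditive on $\N$, i.e.\ $a(N_1+N_2)\le a(N_1)+a(N_2)$ for all $N_1,N_2\ge 1$. For this I would invoke the alternative description of $(a(n))$ recorded earlier: each integer $k$ occurs $1+\nu_2(k)$ times in the sequence. The index of the last occurrence of $k$ is then
$$B(k):=\sum_{j=1}^{k}(1+\nu_2(j))=k+\nu_2(k!)=2k-s_2(k),$$
where $s_2$ denotes the binary digit sum and the final equality is Legendre's formula. Equivalently, $a(N)$ is the smallest $k$ with $2k-s_2(k)\ge N$.

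To finish, set $k_i:=a(N_i)$, so $N_i\le B(k_i)=2k_i-s_2(k_i)$. Adding these and invoking the classical subadditivity $s_2(a+b)\le s_2(a)+s_2(b)$ (whose defect counts the carries in binary addition) gives
$$ N_1+N_2 \le 2(k_1+k_2)-s_2(k_1)-s_2(k_2) \le 2(k_1+k_2)-s_2(k_1+k_2) = B(k_1+k_2), $$
so $a(N_1+N_2)\le k_1+k_2=a(N_1)+a(N_2)$, as required.

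The main obstacle is spotting the right reformulation: once both cases collapse to subadditivity of $a$ itself, the closed form $B(k)=2k-s_2(k)$ turns the problem into a one-line consequence of binary carry behaviour. A purely inductive argument from the recurrence of $a$ is possible too, but it becomes case-heavy when $N_1$ and $N_2$ lie in the same dyadic block below their sum, so I would prefer the route via the frequency characterisation.
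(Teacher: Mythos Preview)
Your argument is correct, and it takes a genuinely different route from the paper's own proof.

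The paper proves sublinearity of $U(\cdot,m)$ by realising $U(n,2^k)$ as $\mu'(n,2^k):=\mu_{A'(n,2^k)}$, where $A'(n,2^k)$ is the class of block-diagonal wirings whose blocks are copies of the optimal tower wirings $W_j\in A^*(2^{j+1}-1,2^j)$ for $0\le j\le k$. Since this class is closed under disjoint union, sublinearity of $\mu'$ is immediate, and the work goes into checking $\mu'=U$ by a short induction. Subadditivity of $(a(n))$ is then deduced as a \emph{corollary} (by taking $m$ large enough that $U(n,m)=a(n)$).

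You go in the opposite direction: you first reduce sublinearity of $U(\cdot,m)$ to subadditivity of $a$ via the two cases on $r_1+r_2$, and then give an independent, purely arithmetic proof of the latter using the frequency description of $(a(n))$, the closed form $B(k)=2k-s_2(k)$ from Legendre's formula, and subadditivity of the binary digit-sum. This avoids wirings entirely and is more elementary; it also surfaces the pleasant identity $a(N)=\min\{k:2k-s_2(k)\ge N\}$. The paper's approach, by contrast, is more structural: it explains \emph{why} $U$ behaves like a $\mu$-type quantity (namely, it is one, over a restricted class), and it identifies the extremal wirings responsible for the bound, which feeds into the later analysis of optimal wirings in \rf{T:n=q(2m-1)}.
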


\begin{proof}
Since $U(\cdot,m)$ is unchanged as $m$ varies over a dyadic block, it
suffices to assume that $m=2^k$ for some $k\ge 0$. We will show that
$U(n,2^k)=\mu'(n,2^k)$, where $\mu'(n,2^k)=\mu_{A'(n,2^k)}$ and $A'(n,2^k)$
is an appropriate set of wiring matrices $W\in M(n,n;\F_2)$ that has the
following closure property: if $W_i\in A'(n_i,2^k)$ for $i=1,2$, then the
block diagonal matrix
$$
W = \begin{pmatrix} W_1 & 0 \\ 0 & W_2 \end{pmatrix}\,\qquad
$$
lies in $A'(n_1+n_2,2^k)$. In terms of wirings, this just says that a
disjoint union of two wirings in this class for given $m=2^k$ also lies in
this class for the same value of $m$. Sublinearity then follows easily from
the definition of $\mu'$ and this closure property.

To define $A'(n,2^k)$, we first define $W_j$ for each $j\ge 0$ to be some
wiring in $A^*(2^{j+1}-1,2^j)$ such that $M(W,0)=\mu^*(2^{j+1}-1,2^j)=2^j$;
this exists by \rf{T:m=2^k}. We now define $A'(n,2^k)$ to be the collection
of matrices $W\in M(n,n;\F_2)$ that are of block diagonal form with $n_j\ge
0$ diagonal blocks of type $W_j$ for some $0\le j\le k$. Thus,
$n=\sum_{j=0}^k n_j(2^{j+1}-1)$.

We first show that $\mu'(n,2^k)\le U(n,2^k)$. Taking $k=0$, it is clear that
$U(n,1)=\mu'(n,1)=n$; note that the identity matrix is the only element of
$A'(n,1)$. We therefore assume that $k>0$.

If $n<2^{k+1}-1$, then $U(n,2^k)=U(n,2^{k-1})$, so by choosing $W\in
A'(n,2^{k-1})$ satisfying $M(W,0)=U(n,2^{k-1})$, we get
$$ \mu'(n,2^k) \le \mu'(n,2^{k-1}) \le U(n,2^{k-1}) = U(n,2^k)\,. $$
If $n=2^{k+1}-1$, then $\mu'(n,2^k)\le M(W_k,0)=2^k=U(n,2^k)$.

Finally if $n>2^{k+1}-1$, then we simply write $n=q(2^{k+1}-1)+r$ in the
usual way and select a wiring $W$ that decomposes into $q$ copies of $W_k$
plus one copy of a wiring $W'\in A'(r,2^k)$ satisfying $M(W',0)=U(r,2^k)$ to
deduce that $\mu'(n,2^k)\le q2^k+U(r,2^k)=U(n,2^k)$, as required.

We now prove the opposite inequality by induction. As mentioned above, the
case $k=0$ is clear. Suppose that $\mu'(\cdot,2^j)=U(\cdot,2^j)$ for all
$0\le j<k$, and suppose that $\mu'(n',2^k)=U(n',2^k)$ for all $n'<n$,
$n'\in\N$. Let $W\in A'(n,2^k)$ be such that $M(W,0)<U(n,2^k)$. Since
$U(n,2^k)\le U(n,2^{k-1})=\mu'(n,2^{k-1})$, $W$ must have a vertex of degree
$2^k$ which is part of a $W_k$. Let $W'$ be the subwiring obtained from $W$
by removing this $W_k$, and so $W'\in A'(n-2^k,2^k)$. Now $M(W_k,0)=2^k$ and
by minimality of $n$, we have $M(W',0)\ge U(n-2^k,2^k)$, so $M(W,0)\ge
U(n-2^k,2^k)+2^k\ge U(n,2^k)$, as required.
\end{proof}

Since $U(n,m)=a(n)$ whenever there exists $k\in\N$ such that $n/2<2^k\le m$,
it follows that $(a(n))$ is also sublinear, a fact we now record.

\begin{cor}\label{C:f sublinear} For all $n_1,n_2\in\N$, we have
$a(n_1+n_2) \le a(n_1) + a(n_2)$.
\end{cor}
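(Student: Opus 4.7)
The plan is to deduce the result directly from Theorem \ref{T:U sublinear} by choosing the parameter $m$ large enough that the function $U(\cdot,m)$ reduces to $a(\cdot)$ on all three arguments $n_1$, $n_2$, $n_1+n_2$ simultaneously.

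Recall from the definition of $U$ that $U(n,m)=q\,2^k+a(r)$, where $k$ is determined by $2^k\le m<2^{k+1}$ and $n=(2^{k+1}-1)q+r$ with $q\ge 0$ and $1\le r\le 2^{k+1}-1$. In particular, if $n<2^{k+1}$, then $q=0$ and $r=n$, so $U(n,m)=a(n)$. This is the key observation: choosing $m$ so that $2m\ge n$ (with $m$ itself a power of $2$) forces $U(n,m)$ to collapse to $a(n)$.

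Given $n_1,n_2\in\N$, pick $k\in\N$ so large that $2^k > (n_1+n_2)/2$, and set $m=2^k$. Then each of $n_1$, $n_2$, and $n_1+n_2$ is strictly less than $2^{k+1}$, so by the observation above,
$$
U(n_1,m)=a(n_1),\quad U(n_2,m)=a(n_2),\quad U(n_1+n_2,m)=a(n_1+n_2).
$$
Applying Theorem \ref{T:U sublinear} with this $m$ now yields
$$
a(n_1+n_2)=U(n_1+n_2,m)\le U(n_1,m)+U(n_2,m)=a(n_1)+a(n_2),
$$
as required.

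There is essentially no obstacle here: the hard work was done in Theorem \ref{T:U sublinear}, and the only thing to verify is that $k$ can always be taken large enough to place us in the regime where $U$ agrees with $a$. Since $n_1+n_2$ is finite, such a $k$ trivially exists, so the argument goes through immediately.
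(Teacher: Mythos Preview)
Your proof is correct and follows exactly the approach the paper uses: the paper simply notes (in the sentence preceding the corollary) that $U(n,m)=a(n)$ whenever there exists $k\in\N$ with $n/2<2^k\le m$, and then records the corollary as an immediate consequence of Theorem~\ref{T:U sublinear}. Your argument spells out precisely this reduction.
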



\section{Results for $m$ near a power of $2$} \label{S:m near 2^k}
\subsection{}
In this section, we give some results for $2^k-2\le m\le 2^k+1$. Our first
result follows rather easily from \rf{T:m=2^k}.

\begin{prop}\label{P:2^{k+1}-1}
For all $\/2\le k\in\N$, we have $\mu(\cdot,2^k-1)=\mu(\cdot,2^k-2)$.
\end{prop}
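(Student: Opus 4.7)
The plan is to show $\mu(n, 2^k - 1) \ge \mu(n, 2^k - 2)$, since the reverse inequality is immediate from monotonicity of $\mu(n,\cdot)$ in its second argument. I proceed by strong induction on $n$. The base case $n \le 2^k - 1$ splits: for $n \le 2^k - 2$ both sides equal $\mu(n,n)$ trivially, while for $n = 2^k - 1$ both sides equal $2^{k-1}$ by \rf{T:m=2^k} applied with $k-1$ in place of $k$, using the hypothesis $k \ge 2$ to ensure that $2^k - 2 \ge 2^{k-1}$.

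For the inductive step $n \ge 2^k$, pick any $W \in A(n, 2^k - 1)$; the goal is $M(W, 0) \ge \mu(n, 2^k - 2)$. If $\deg(W) \le 2^k - 2$ we are done, so assume some vertex has degree $2^k - 1$. Pivoting about it keeps us in $A(n, 2^k - 1)$, cannot increase $M(W,0)$, and produces a forward-invariant $\hat K_t$ subgraph on a set $T$ with $t = 2^k - 1$. Relative to the splitting $S = T \cup (S \setminus T)$, the matrix has block form
$$ W = \begin{pmatrix} 1_{t\times t} & B \\ 0 & C \end{pmatrix}, $$
so the $T$-coordinates of $Wx$ equal $s\,\mathbf{1}_T + B x_{S\setminus T}$, where $s \in \{0,1\}$ is the parity of $\sum_{i \in T}(x_T)_i$. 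Flipping any single entry of $x_T$ toggles $s$, so for every $x_{S\setminus T}$ we can choose $x_T$ to light at least $\max(|B x_{S\setminus T}|,\; t - |B x_{S\setminus T}|) \ge \lceil t/2 \rceil = 2^{k-1}$ bulbs inside $T$, on top of the $|C x_{S\setminus T}|$ bulbs lit outside. Hence $M(W,0) \ge 2^{k-1} + M(C,0)$, where $C$ is the admissible subwiring obtained by restricting to $S \setminus T$, lying in $A(n - 2^k + 1, 2^k - 1)$.

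To close the argument, the induction hypothesis upgrades $M(C, 0) \ge \mu(n - 2^k + 1, 2^k - 1)$ to $M(C, 0) \ge \mu(n - 2^k + 1, 2^k - 2)$. Combined with sublinearity (\rf{C:sublinear}) and $\mu(2^k - 1, 2^k - 2) = 2^{k-1}$ (again from \rf{T:m=2^k} with $k-1$ in place of $k$), this yields
$$ \mu(n, 2^k - 2) \le \mu(n - 2^k + 1, 2^k - 2) + 2^{k-1} \le M(W, 0), $$
completing the induction. The only delicate point is the parity argument on $T$: it hinges on the fact that, because $T$ is a forward-invariant $\hat K_t$, every column of $W$ indexed by $T$ equals $\mathbf{1}_T$, so the full contribution of $x_T$ to the $T$-coordinates of $Wx$ is captured by its parity and can be toggled independently of $x_{S\setminus T}$. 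Once that observation is in place, everything else is a short chain of applications of \rf{T:m=2^k}, \rf{C:sublinear}, and the inductive hypothesis.
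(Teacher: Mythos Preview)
Your proof is correct and follows essentially the same approach as the paper's: both proceed by induction on $n$, and your explicit pivoting-and-block-decomposition argument in the inductive step is precisely the content of \rf{L:alternative}, which the paper invokes as a black box to get $\mu(n,m)\ge\mu(n-m,m)+\lceil m/2\rceil$. The only other difference is cosmetic---the paper handles the base case $n\le 2^k$ via \rf{P:general m} while you treat $n\le 2^k-1$ by a direct appeal to \rf{T:m=2^k}---but both routes close the induction in the same way, via sublinearity and $\mu(2^k-1,2^k-2)=2^{k-1}$.
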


\begin{proof}
Let $m:=2^k-1$. By \rf{P:general m}, we have $\mu(n,m-1)\le 2^{k-1}+1\le m$
for all $n\le 2^k$. It follows that $\mu(n,m)=\mu(n,m-1)$ for $n\le 2^k$,
since any wiring $W$ with a degree $m$ vertex satisfies $M(W,0)\ge m$.

Suppose inductively that $\mu(n',m)=\mu(n',m-1)$ for all $1\le n'<n$, where
$n>2^k$, and we wish to extend this equation to $n'=n$. By
\rf{L:alternative}, either $\mu(n,m)=\mu(n,m-1)$ and we have established the
inductive step, or
$$ \mu(n,m) \ge \mu(n-m,m)+2^{k-1} = \mu(n-m,m-1)+2^{k-1} $$
and so
\begin{align*}
\mu(n-m,m-1) + 2^{k-1} &\le \mu(n,m) \\
&\le \mu(n,m-1) \tag{trivial estimate}\\
&\le \mu(n-m,m-1)+\mu(m,m-1) \tag{sublinearity}\\
&\le \mu(n-m,m-1)+\mu(m,2^{k-1}) \tag{trivial estimate}\\
&= \mu(n-m,m-1)+2^{k-1}\,. \tag{by \rf{T:m=2^k}}
\end{align*}
The inductive step, and so the lemma, follows from equality of the first and
last lines.
\end{proof}


\begin{thm}\label{T:m=2^k+1}
Let $m=2^k$ for some $k\in\N$, and suppose that $\mu(\cdot,m-1) =
U(\cdot,m-1)$. Then $\mu(\cdot,p) = U(\cdot,p)$ also holds for $p=m$ and
$p=m+1$. In particular, $\mu(\cdot,m)=\mu(\cdot,m+1)$.
\end{thm}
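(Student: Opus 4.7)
The plan is to reduce the theorem to a single lower bound and then induct on $n$. Since $U(\cdot,2^k) = U(\cdot, 2^k+1)$ (both lie in the same dyadic block of the definition of $U$), $\mu(\cdot, m)$ is nonincreasing in $m$, and $\mu\le U$ by \rf{P:general m}, it suffices to prove the single inequality $\mu(n, 2^k+1) \ge U(n, 2^k)$ for every $n$; this sandwiches both $\mu(n, 2^k)$ and $\mu(n, 2^k+1)$ to the common value $U(n, 2^k)$ and yields the ``in particular'' consequence.

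I will prove this inequality by strong induction on $n$. The base cases $n \le 2^{k+1}-1$ follow directly: \rf{T:m=2^k} pins $\mu(2^{k+1}-1, 2^k) = 2^k = a(2^{k+1}-1)$, and for $n<2^{k+1}-1$ one combines monotonicity of $\mu$ in $m$ with the hypothesis $\mu(\cdot,2^k-1) = U(\cdot,2^k-1)$ together with the easy identity $U(n, 2^k-1) = U(n, 2^k) = a(n)$ that holds in this range.

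For the inductive step at $n\ge 2^{k+1}$, I apply \rf{L:alternative} at $m=2^k+1$. In Case 2 of the lemma, the induction hypothesis converts the bound to $\mu(n, 2^k+1) \ge U(n-2^k-1, 2^k) + 2^{k-1}+1$, and I verify case-by-case on the remainder $r$ in the decomposition $n=(2^{k+1}-1)q+r$ that this quantity equals or exceeds $U(n, 2^k)$. In Case 1 the equality $\mu(n, 2^k+1) = \mu(n, 2^k)$ holds, and I run a chain argument in the spirit of the proof of \rf{P:2^{k+1}-1}: I sandwich $\mu(n, 2^k+1)$ between the Case 2 lower bound from \rf{L:alternative} at $m = 2^k$ and the sublinearity upper bound $\mu(n, 2^k-1) \le \mu(n-m', 2^k-1) + \mu(m', 2^k-1)$ for a suitable split $m'$, using the hypothesis together with the specific value $\mu(2^k-1, 2^{k-1}) = 2^{k-1}$ from \rf{T:m=2^k} to force equality throughout.

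The main obstacle will be the residues $r$ where the direct Case 2 bound at $m=2^k+1$ falls short of $U(n, 2^k)$ by exactly one, which occurs when the sequence $(a(j))$ increments at two consecutive indices near $r-2^k$. For these residues I switch to \rf{L:alternative} at $m=2^{k+1}-1$, where the recursion closes exactly because $U(n, 2^k) - U(n-(2^{k+1}-1), 2^k) = 2^k = \nu(2^{k+1}-1)$. A subtle point is that \rf{P:2^{k+1}-1} applied at level $k+1$ automatically forces Case 1 of the alternative at $m = 2^{k+1}-1$, so the usable Case 2 lower bound must be extracted via the sandwich chain --- combining Case 2's lower estimate with the sublinearity upper estimate $\mu(n, 2^{k+1}-2) \le \mu(n-(2^{k+1}-1), 2^{k+1}-2) + \mu(2^{k+1}-1, 2^{k+1}-2)$ and the identity $\mu(2^{k+1}-1, 2^{k+1}-2) = 2^k$ from \rf{T:m=2^k} --- exactly mirroring the structure of the proof of \rf{P:2^{k+1}-1}.
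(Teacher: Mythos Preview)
Your reduction to proving $\mu(n,2^k+1)\ge U(n,2^k)$ by induction on $n$ is sound, and the base cases $n\le 2^{k+1}-1$ are fine (though your phrase ``monotonicity of $\mu$ in $m$'' is misleading here: the actual argument is that either some vertex has degree $\ge 2^k$, giving $M(W,0)\ge 2^k\ge a(n)$, or else $W\in A(n,2^k-1)$ and the hypothesis applies directly).

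The inductive step, however, has a genuine gap. \rf{L:alternative} is a disjunction, and you do not get to choose which alternative holds. For certain residues $r$ --- for instance $r=5$ when $k=2$, or $r\in\{9,12\}$ when $k=3$ --- the Case~2 bound at $m=2^k+1$ falls short of $U(n,2^k)$ by exactly one, \emph{and} the Case~2 bound at $m=2^k$ (reached after Case~1 at $m=2^k+1$) also falls short by one. The only branch that succeeds is Case~1 at both levels, landing on $\mu(n,2^k-1)=U(n,2^k-1)\ge U(n,2^k)$; but nothing in your argument forces that branch.

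Your fallback to \rf{L:alternative} at $m=2^{k+1}-1$ does not rescue this. By \rf{P:2^{k+1}-1}, Case~1 there \emph{always} holds, i.e.\ $\mu(n,2^{k+1}-1)=\mu(n,2^{k+1}-2)$, and via monotonicity this yields only the tautology $\mu(n,2^k+1)\ge\mu(n,2^{k+1}-2)$. The sandwich chain in the proof of \rf{P:2^{k+1}-1} runs in the opposite direction: it \emph{assumes} the Case~2 lower bound and squeezes it against sublinearity to deduce Case~1. It cannot manufacture the Case~2 lower bound from Case~1, and the sublinearity inequality you quote is an upper bound on $\mu(n,2^{k+1}-2)$, not a lower bound.

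The paper's proof takes a different route that avoids this obstruction entirely. It works directly with a hypothetical minimal counterexample wiring $W$: after pivoting to maximise the number of forward-invariant $\hat K_m$ (respectively $\hat K_{m+1}$) subgraphs, one partitions the vertex set into $A$ (vertices lying in such subgraphs) and $B$ (the rest), so that $W_B\in A(n_B,m-1)$. Two lighting strategies --- light all of $A$, or light $B$ optimally and then at least half of $A$ --- combine with the hypothesis $\mu(\cdot,m-1)=U(\cdot,m-1)$ to force a contradiction for every residue simultaneously. This ``Partition by Degree'' argument extracts structural information (the divisibility $m\mid n_A$ and the bound $\mu(n_B,m-1)+n_A/2<U(n,m)$) that the purely recursive approach via \rf{L:alternative} cannot access, and that information is exactly what closes the shortfall-by-one cases.
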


\begin{proof}
\rf{P:general m} tells us that $\mu(\cdot,\cdot)\le U(\cdot,\cdot)$, so we
must prove inequalities in the opposite direction. For $k=1$, the desired
conclusion follows from Theorems \ref{T:m=2} and \ref{T:m=3}, so we assume
that $k>1$.

We first consider the case $p=m$. Suppose for the sake of contradiction that
$m=2^k>2$ is such that $\mu(\cdot,m)\ne U(\cdot,m)$, even though
$\mu(\cdot,m-1)=U(\cdot,m-1)$. Also for the sake of contradiction, assume
that $n\in\N$ is the smallest number such that $\mu(n,m)<U(n,m)$, and that
$W\in A(n,m)$ is such that $M(W,0)<U(n,m)$. Since $\mu(n,m-1)=U(n,m-1)\ge
U(n,m)$, it follows that $W$ must contain a vertex of degree $m$. This
certainly implies that $M(W,0)\ge m=U(2m-1,m)$, so $n\ge 2m$.

We write $n=q(2m-1)+r$, where $q,r\in\N$ and $r<2m$. By induction we have
$M(W,0)<qm+\mu(r,m)$. We may also assume that we cannot increase the number
of $F_m$ subgraphs in $W$ by any amount of pivoting; recall that an $F_m$ is a
forward invariant augmented complete subgraph on $m$ vertices.

We now carry out what for later reference we call a {\it Partition by Degree
argument}: we partition the set of $n$ vertices into subsets $A$ and $B$,
where $A$ consists of all vertices that lie in an $F_m$, and $B$ consists of
all other vertices. Let us write $n_A,n_B$ for the cardinalities of $A$ and
$B$, respectively.

Since we cannot increase the number of $F_m$ subgraphs by pivoting, we have $W_B\in
A(n_B,m-1)$. Since we can light all vertices in $A$ by pressing one vertex
in every $F_m$, we must have
$$ n_A < U(n,m) = qm + \mu(r,m) \le (q+1)m\,. $$
But $n_A$ is a multiple of $m$, so $n_A\le qm$. Alternatively, we can first
light at least $\mu(n_B,m-1)$ of the $B$-vertices followed by at least
$\nu(n_B,m)\ge n_A/2$ of the $A$-vertices, and so
\begin{equation}\label{E:BA bound1}
\mu(n_B,m-1)+n_A/2<qm+\mu(r,m)\,.
\end{equation}

Suppose $n_A=qm$, and so $n_B=n-qm=q(m-1)+r$. By assumption,
$\mu(n_B,m-1)=qm/2+\mu(r,m-1)$, and so
$$ \mu(n_B,m-1) + n_A/2 = qm + \mu(r,m-1)\ge qm + \mu(r,m)\,, $$
contradicting \rf{E:BA bound1}. If $n_A$ is smaller than $qm$, it must be
smaller by $q'm$ for some $q'\in\N$, thus increasing $\mu(n_B,m-1)$ by at
least $q'm/2$:
$$
\mu(n-qm-q'm,m-1) \ge \mu(n-qm-q'(m-1),m-1) = \mu(n-qm,m-1)+{\frac{q'm}2}\,.
$$
Thus, $\mu(n_B,m-1)+n_A/2$ is at least as large as in the case $n_A=qm$, and
we still get a contradiction.

We next prove that $\mu(n,m+1)=\mu(n,m)$. Again for the sake of contradiction,
we suppose that $m=2^k>2$ is such that $\mu(\cdot,m+1)\ne U(\cdot,m+1)$,
even though $\mu(\cdot,p)=U(\cdot,p)$ when $p=m-1$. This last equation holds
also for $p=m$ by the first part of the proof. Note that
$U(n,m+1)=U(n,m)=\mu(n,m)$.

Suppose also for the sake of contradiction that $n$ is minimal for the
inequality
$$ \mu(n,m+1)<U(n,m+1)=\mu(n,m)\,. $$
Now, $U(n,m+1)\le m+1$ for $n\le 2m$, so as in the first part of the proof, we
must have $n>2m$. We again write $n=q(2m-1)+r$, where $q,r\in\N$ and $r<2m$.
Let $W\in A(n,m+1)$ be such that $M(W,0)=\mu(n,m+1)$, and we assume that the
number of $F_{m+1}$s cannot be increased by pivoting, and that the only
possible pivoting operations that may increase the number of $\hat K_m$ subgraphs are
those that decrease the number of $F_{m+1}$ subgraphs; recall that a $\hat K_m$ is an
augmented complete subgraph on $m$ vertices (which is not necessarily forward
invariant).

We carry out another Partition by Degree argument, with $A$ consisting of
all vertices that lie in a $\hat K_m$ or an $F_{m+1}$, and $W_B\in A(n_B,m-1)$.
Now, $W$ must be a vertex of degree $m+1$, since $M(W,0)<\mu(n,m)$, and so
$W$ contains at least one $F_{m+1}$. Suppose that there are at least two
$F_{m+1}$s. We can light at least $\mu(n-2m-2,m+1)$ of the other vertices,
followed by at least $2\nu(m+1)=m+2$ of the vertices in the pair of
$F_{m+1}$s. Now
\begin{align*}
\mu(n-2m-2,m+1)+m+2 &= U(n-2m-2,m+1)+m+2 \tag{minimality of $n$}
  \\
&= U(n-3,m)+2 \\
&= \mu(n-3,m)+\mu(3,m)\\
&\ge \mu(n,m) \tag{sublinearity}\,,
\end{align*}
contradicting the fact that $\mu(n,m+1)<\mu(n,m)$.

Thus, there is precisely one $F_{m+1}$, and $n_A$ is equivalent to $1$ mod
$m$. We distinguish between those $\hat K_m$s that are forward invariant, which
we denote as usual by $F_m$, and those that are not, which we denote by
$N_m$. The one external link of each $N_m$ is to the $F_{m+1}$, since
otherwise we could pivot to get a second $F_{m+1}$. Furthermore, any two
$N_m$s must link to the same vertex in the $F_{m+1}$, since if this were not
the case, we could pivot about a vertex in one $N_m$ to get a wiring with one
$N_m$ linked to a second $N_m$, which in turn links to a $F_{m+1}$, and such
a configuration would allow us to get a second $F_{m+1}$ by pivoting about
the vertex in the first $N_m$.

It follows that we can light all except possibly one of the vertices in $A$,
and so $n_A-1 < qm+\mu(r,m-1)\le (q+1)m$, which self-improves to $n_A\le
qm+1$. Alternatively, as in the first part of the proof, we get
\begin{equation}\label{E:BA bound2}
\mu(n_B,m-1)+(n_A+1)/2<qm+\mu(r,m-1)\,.
\end{equation}
Suppose $n_A=qm+1$, and so $n_B=q(m-1)+r-1$. By the inductive hypothesis,
$\mu(n_B,m-1)=qm/2+\mu(r-1,m-1)$, and so by Lemma \ref{L:F},
$$ \mu(n_B,m-1) + (n_A+1)/2 = qm + \mu(r-1,m-1)+1 \ge qm + \mu(r,m)
=qm+\mu(r,m-1), $$
contradicting \rf{E:BA bound2}. The case where $n_A$ is smaller than $qm$ is
ruled out as in the first part of the proof.
\end{proof}

\subsection{Partition by degree arguments}
Since we will be seeing other variations of the above {\it Partition by
Degree arguments}, let us describe the common features of these arguments, so
that we can be sketchy in all subsequent uses of it. Given a wiring $W$ on
$n$ vertices, we partition the set of vertices into two subsets, typically
called $A$ and $B$, and we denote the cardinality of $A$ and $B$ by $n_A$
and $n_B$, respectively. The wiring will be initially pivoted so that $A$ is
forward invariant and $W_A$ will consist only of $\hat K_j$s for various $j\ge
2^k$. There will be very few links between different $\hat K_j$s in $A$, allowing
us to light almost all except at most $n_0$ of the vertices in $A$ by
pressing one vertex in each $\hat K_j$; for instance, $n_0$ was either $0$ or $1$
in the two Partition by Degree arguments in the above proof. This gives the
bound $n_A\le K-n_0$, where $K$ equals either $\mu(n,m)$ or an assumed value
of $\mu(n,m)$ from which we wish to derive a contradiction. By the structure
of $A$, we often know that $n_A$ has a certain value mod $2^k$, allowing us
to improve the estimate $n_A\le K-n_0$ to $n_A\le n_1$ for some $n_1\le
K-n_0$.

By somehow maximizing the number of $\hat K_j$s in $A$, we arrange for the
restricted wiring $W_B$ to lie in $A(n_B,m')$ for some $m'\le 2^k-1$, so we
may light at least $\mu(n_B,m')$ of these vertices followed by at least
$\nu(n_A)$ vertices in $A$. This gives the inequality
\begin{equation}\label{E:n_B n_A}
\mu(n_B,m') + \nu(n_A)\le K\,
\end{equation}

The aim of the Partition by Degree argument is now either to derive a
contradiction, or to show that $n_A=n_1$. To do this, we first consider the
possibility that $n_A=n_1$, and we typically deduce that
$\mu(n-n_1,m')+\nu(n_1)$ either equals or exceeds $K$. If instead we allow
$n_A$ to decrease below $n_1$, then $n_A$ typically must be decreased by a
multiple of $2^k$, and $\mu(n_B,m')$ increases by at least as much as
$\mu(n_A)$ decreases. Thus, if $\mu(n-n_1,m')+\nu(K_1)>K$, we get a
contradiction to \rf{E:n_B n_A} also for any value of $n_A$ less than $n_1$,
and we are done. In other instances of this argument,
$\mu(n-n_1,m')+\nu(K_1)=K$, but taking a value of $n_A$ smaller than $K_1$
increases $\mu(n_B,m')$ strictly more than $\mu(n_A)$ decreases, so we
conclude that $n_A$ must equal $n_1$ and $n_B=n-n_1$, as we are seeking to
prove in such instances. \bigskip

\subsection{A technical lemma}
We now give a lemma which makes no mentions of wirings and
vertices but which we will need later. In this lemma, $|u|$ denotes the
Hamming norm of a vector $u\in \F_2^N$, as defined in \rf{S:introduction}.

\begin{lem}\label{L:switches}
Let $n$, $N$ and $M$ be positive integers.
Then the following are equivalent:
\\
(1)
There exist vectors $a_j=(a_{i,j})_{i=1}^N\in \F_2^N$, $1\le j\le n$ such
that
\begin{equation}\label{E:switches}
\left|\sum_{j=1}^n \la_j a_j\right|=M\in\N\,,
  \qquad\text{ for all }\la=(\la_j)\in F_n:=\F_2^n\setminus\{0\}\,.
\end{equation}
\\
(2)
$M=2^{n-1}q$ for some $q\in\N$, and $N\ge 2M-2^{1-n}M$.
\\
Assuming these conditions are fulfilled, all solutions $(a_{i,j})$ to
\rf{E:switches} are equivalent modulo permutations of the $i$ and $j$
indices.
\end{lem}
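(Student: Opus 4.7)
I would recast everything in terms of the row profile of the $N\times n$ matrix $A = (a_{i,j})$. Writing $r_i \in \F_2^n$ for the $i$-th row and $c_v := \#\{i : r_i = v\}$ for $v \in \F_2^n$, the $i$-th coordinate of $\sum_{j=1}^n \la_j a_j$ is the $\F_2$-inner product $\la \cdot r_i$, so
$$
\Bigl|\sum_{j=1}^n \la_j a_j\Bigr| = \sum_{v \in \F_2^n:\ \la\cdot v = 1} c_v.
$$
Condition \rf{E:switches} is therefore the linear system of $2^n-1$ equations saying that, for every nonzero $\la\in\F_2^n$, the half-space sum $\sum_{\la\cdot v = 1} c_v$ equals $M$. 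The problem reduces to solving this system in nonnegative integers $(c_v)_{v\in\F_2^n}$, subject to $\sum_v c_v = N$.

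To solve it I would apply additive Fourier analysis on $\F_2^n$, using the characters $\chi_\la(v) = (-1)^{\la\cdot v}$ and the identity $2\cdot\mathbf{1}_{\la\cdot v = 1} = 1 - \chi_\la(v)$. The system rewrites as $\hat c(\la) := \sum_v c_v \chi_\la(v) = N - 2M$ for every nonzero $\la$, together with $\hat c(0) = N$. Fourier inversion on $\F_2^n$ then pins down $c_v$ uniquely:
$$
c_v \;=\; \begin{cases} N - 2M + 2^{1-n}M, & v = 0,\\ 2^{1-n}M, & v \ne 0.\end{cases}
$$

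From this closed form everything drops out. Integrality of $c_v$ for nonzero $v$ forces $2^{n-1}\mid M$, i.e.\ $M = 2^{n-1}q$ for some $q\in\N$; nonnegativity of $c_0$ forces $N \ge 2M - 2^{1-n}M$; this gives (1)$\Rightarrow$(2). Conversely, whenever both constraints hold, the prescribed multiplicities are legitimate, and any $N\times n$ matrix whose rows consist of $q$ copies of each nonzero $v\in\F_2^n$ together with $N-(2^n-1)q$ zero rows satisfies \rf{E:switches}, giving (2)$\Rightarrow$(1). Uniqueness up to permutations of the $i$- and $j$-indices is then automatic: since $(c_v)$ is uniquely determined, any two solutions have the same row multiset and so differ by a permutation of row indices, while column permutations (which merely relabel the coordinates of $\F_2^n$) preserve the multiset because its nonzero part is uniform. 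The only real work is the Fourier inversion itself; once it is executed, both directions of the equivalence and the uniqueness statement follow simultaneously.
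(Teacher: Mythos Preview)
Your argument is correct and takes a genuinely different route from the paper's. The paper proves the implication (1)$\Rightarrow$(2) and the uniqueness claim by induction on $n$: it defines the sets $S_n(u;A)=\{i:r_i=u\}$ (your $c_u$ in disguise), handles $n=1,2$ directly via the identity $|a_1+a_2|=|a_1|+|a_2|-2\#\{i:a_{i,1}=a_{i,2}=1\}$, and for the inductive step restricts to the first $m-1$ columns in three different ways (keeping column $m-1$, replacing it by column $m$, or by column $m-1$+column $m$) to pin down the cardinalities $\#S_m(u;A)$ from the known values of $\#S_{m-1}(\cdot;\cdot)$.

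Your Fourier-analytic approach bypasses the induction entirely: once the system is rewritten as $\hat c(\la)=N-2M$ for $\la\ne0$ and $\hat c(0)=N$, inversion on $(\F_2^n,+)$ gives the closed form $c_v=2^{1-n}M$ for $v\ne0$ and $c_0=N-2M+2^{1-n}M$ in one stroke, from which integrality, nonnegativity, and uniqueness of the row multiset are immediate. This is cleaner and more conceptual; it also makes transparent \emph{why} the answer has the uniform shape it does (the Fourier transform of $c$ is constant off the origin, so $c$ minus a constant is a delta at $0$). The paper's inductive proof, by contrast, is more elementary in that it uses nothing beyond inclusion--exclusion, at the cost of being longer and somewhat opaque about the underlying structure. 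One minor remark: your closing sentence about column permutations is correct but superfluous---since the row multiset is already uniquely determined, $i$-permutations alone suffice for the uniqueness statement.
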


\begin{proof}
Assuming the conditions (2) are fulfilled, with $M=2^{n-1}q$, we see that $N\ge
(2^n-1)q$, so we can allocate $(2^n-1)q$ vertices into $2^n-1$ pairwise
disjoint sets of $q$ vertices each. We label these sets $S_k$ for $1\le k\le
2^n-1$, and write $S=\bigcup_{i=1}^{2^n-1} S_k$. Writing $d_{n-1;k}\dots
d_{1;k}d_{0;k}$ for the binary expansion of $1\le k\le 2^n-1$, we let
$a_{i,j}:=d_{j-1;k}$ for all $i\in S_k$, and $a_{i,j}=0$ of $i\notin S$. It
is readily verified that \rf{E:switches} holds with this choice of
$(a_{i,j})$.

Conversely, suppose that $A:=(a_{i,j})$ satisfy \rf{E:switches}. Note that
this condition implies the same condition with $n$ replaced by any number
$1\le n'\le n$, and if we take $n'=n-1$, we can replace $a_{n-1}$ by either
$a_n$ or $a_{n-1}+a_n$ and the condition remains true. For each $u=(u_j)\in
F_n$, we write $S_n(u;A)$ for the set of indices $1\le i\le n$ such that
$a_{i,j}=u_j$ for all $j$. Trivially, such sets $S_n(u;A)$ are pairwise disjoint. Writing
$\#(\cdot)$ for set cardinality, we claim that $\#(S_n(u;A))=2^{1-n}M$.
Since $\#(F_n)=2^n-1$, it follows from the claim that $N\ge 2M-2^{1-n}M$.
Also, the fact that $\#(S_n(u;A))$ is independent of $u\in F_n$ means that
there is essentially only one such solution, modulo permutations of the
indices, so the result follows from the claim. We prove this by induction on
$n$.

If $n=1$, the claim is trivial. For $n=2$, note that $|a_1+a_2| =
|a_1|+|a_2|-2K = 2M-2K$, where $K$ is the number of indices $i$ for which
$a_{i,1}=a_{i,2}=1$. Since $2M-2K=M$, we must have $K=M/2$. This readily
implies the result for $n=2$.

Suppose inductively that the result is true for $n<m$, where $m>2$, and we
want to prove it for $n=m$. Let us define the following matrices
$$
A_1 = (a_{i,j})_{\substack{1\le i\le N\\1\le j\le m-2}}\,, \quad
A_2 = (a_{i,j})_{\substack{1\le i\le N\\1\le j\le m-1}}\,, \quad
A_3 = (b_{i,j})_{\substack{1\le i\le N\\1\le j\le m-1}}\,, \quad
A_4 = (c_{i,j})_{\substack{1\le i\le N\\1\le j\le m-1}}\,,
$$
where
\begin{align*}
b_{i,j} &=
  \begin{cases}
  a_{i,j}, &j\le m-2\,, \\ a_{i,m}, &j=m-1\,,
  \end{cases} \\[4pt]
c_{i,j} &=
  \begin{cases}
  a_{i,j}, &j\le m-2\,, \\ a_{i,m-1}+a_{i,m}, &j=m-1\,,
  \end{cases}
\end{align*}
We assume that $A:=(a_{i,j})_{\substack{1\le i\le N\\1\le j\le m}}$
satisfies \rf{E:switches} for $n=m$, so certainly $A_s$ satisfies
\rf{E:switches} for $1\le s\le 4$ (for $n=m-2$ or $n=m-1$).

By the inductive assumption $\#(S_{m-1}(u;A_s))=2^{2-m}M$ for each $u\in
F_{m-1}$ and $2\le s\le 4$. Considering separately those
$u=(u',u_{m-1},u_m)\in F_{m-2}\times F_1\times F_1=F_m$ such that $u'\ne 0$
and $u'=0$, we can in both cases argue as for $n=2$ above that
$\#(S_m(u))=2^{1-n}M$, as required.
\end{proof}

\subsection{Towers again}
We return to the special wirings $V_k$ discussed in Subsection \ref{SS:towers}.
For the rest of this section, a $\K_i$ will mean a $\hat K_{2^i}$, 
i.e.~an augmented complete
graph on $2^i$ vertices.  
The wiring $V_k\in A^*(2^{k+1}-1,2^k)$ consists of augmented 
complete
subgraphs $\K_0$, $\K_1$, $\K_2$,$\ldots$,$\K_k$, such that each vertex of each
$\K_i$ toggles zero vertices in $\K_j$ for $j<i$ and toggles $2^{j-1}$
vertices in each $\K_j$, $i<j\le k$. In addition the set of
vertices toggled in $\K_j$ by any $\K_i$ vertex for $i<j$ is independent of
which vertex in $\K_i$ is chosen, so we may as well restrict ourselves to
considering vertex press sets where we are allowed to press only one vertex,
which we call the {\it designated vertex}, in each $\K_i$. We say that $\K_i$
is {\it activated} if its designated vertex is pressed. Thus, each $\K_i$ can
be viewed as a single switch which toggles $2^{j-1}$ indices in $\K_j$ for
each $j>i$. We assume that this wiring is arranged so that activating one or
more of the $\K_i$, $i<j$, always lights exactly $2^{j-1}$ of the vertices in
$\K_j$. This is possible by \rf{L:switches}. In view of the uniqueness in
\rf{L:switches}, this defines the wiring $V_k$ uniquely up to relabeling of
the vertices within each $\K_j$, $1\le j\le k$.

The next theorem generalises 
our remark that the Sylvester-Hadamard wiring $W_k$ can be
pivoted to obtain $V_k$. In
this theorem, we push further with the ideas in the proof of \rf{T:m=2^k+1}
to show that if $\mu(\cdot,p) = U(\cdot,p)$ for $p<2^k$ then, modulo (full
and partial) pivoting, there really is only one optimal wiring in $A(n,2^k)$
for each $n=q(2^{k+1}-1)$, $q\in\N$, namely $q$ disjoint copies of $V_k$.

\begin{thm}\label{T:n=q(2m-1)}
Suppose that $\mu(\cdot,p) = U(\cdot,p)$ for all $p<m:=2^k$, for some
$k\in\N$. If 
$n=q(2m-1)$
for some $q\in\N$, 
and if $W\in A(n,m)$ is such that $M(W,0)=\mu(n,m)$, 
then $W$ can be pivoted to the block diagonal wiring
$\diag(V_k,\dots,V_k)$, where $V_k\in A^*(2m-1,m)$ is as above; both full
and partial pivoting operations may be required.
\end{thm}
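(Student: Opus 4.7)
The plan is to argue by induction on $k$. The base $k=0$ is trivial since $m=1$ and $A(n,1)=\{I_n\}=\{\diag(V_0,\dots,V_0)\}$.

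For the inductive step, first extend the standing hypothesis one level up: the case $p=m-1$ of the hypothesis gives $\mu(\cdot,m-1)=U(\cdot,m-1)$, so \rf{T:m=2^k+1} yields $\mu(\cdot,m)=U(\cdot,m)$, whence $M(W,0)=\mu(n,m)=qm$. Next, pivot $W$ to maximize the number $s$ of vertex-disjoint $F_m$'s; let $A$ be their union and $B$ the rest. Maximality forbids any $B$-vertex from having in-$B$ degree $m$, so $W_B\in A(n_B,m-1)$ where $n_B=q(2m-1)-sm$. A Partition by Degree argument then pins down $s=q$: pressing one switch per $F_m$ (possible because $A$ is forward invariant) gives $s\le q$, while the companion bound $M(W,0)\ge \mu(n_B,m-1)+sm/2$, combined with the routine estimate $U(n_B,m-1)\ge (q+\delta)\cdot 2^{k-1}+1$ valid for $\delta:=q-s\ge 1$, would imply $M(W,0)\ge qm+1$. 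Hence $n_A=qm$, $n_B=q(m-1)$, and $M(W_B,0)=q\cdot 2^{k-1}$.

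Now for each $v\in B$, pressing $x_B=e_v$ and optimising $x_A$ yields
$$qm\ge d_B+\sum_F \max(d_F,m-d_F)\ge d_B+qm-\sum_F d_F,$$
so $d_B\le\sum_F d_F\le m-d_B$, whence $d_B\le m/2$. Thus $W_B\in A(q(m-1),2^{k-1})$ and $M(W_B,0)=q\cdot 2^{k-1}=\mu(q(2\cdot 2^{k-1}-1),2^{k-1})$. The inductive hypothesis (the theorem at level $k-1$) now applies, so a sequence of partial pivots with $T\supseteq A$ (chosen to keep $A$ invariant) converts $W_B$ into $q$ disjoint copies $V^{(1)},\dots,V^{(q)}$ of $V_{k-1}$.

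Finally, analyse the cross-cluster edges. For any $x_B$ with $|W_Bx_B|=q\cdot 2^{k-1}$ (e.g., any choice of a nonempty set of designated vertices inside each $V^{(i)}$), the equality $M(W,0)=qm$ forces the toggle of $x_B$ on each $F^{(r)}$ to have weight exactly $m/2$. Varying the pressing inside a single $V^{(i)}$ while freezing the others and invoking \rf{L:switches} together with its uniqueness clause should pin down the edges from each $V^{(i)}$ to each $F^{(r)}$ up to permutation of targets within $F^{(r)}$; a final sequence of partial pivots inside the $V^{(i)}$'s would then collapse each $V^{(i)}$'s external links onto a single partner $F^{(r)}$, producing $\diag(V_k,\dots,V_k)$. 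The main obstacle is precisely this last step: disentangling the cross-cluster connections via the uniqueness clause in \rf{L:switches} and verifying that the required rearrangements can actually be realised by a sequence of (partial) pivots demands bookkeeping more delicate than in any of the earlier steps, which are largely routine once the Partition by Degree framework is in place.
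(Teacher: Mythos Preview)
Your outline matches the paper's proof in all essentials: Partition by Degree to force $s=q$, the reduction to $W_B\in A(q(m-1),2^{k-1})$ with $M(W_B,0)=q\cdot2^{k-1}$, a recursion on $k$, and a final disentangling of the $B$-to-$A$ links via \rf{L:switches}. Your inequality $d_B\le\sum_F d_F\le m-d_B$ is a clean variant of the paper's argument (the paper instead partially pivots about a hypothetical $B$-vertex of degree $j>2^{k-1}$ and counts). Your use of the theorem itself as inductive hypothesis in place of the paper's explicit chain $W_k,W_{k-1},\dots,W_0$ is an organisational difference only: both unwind to the same tower of $\K_j$'s.

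The gap you flag is exactly where the work lies, and the paper's resolution is worth knowing. Two points. First, no ``final sequence of partial pivots'' is needed: once the $V^{(i)}$'s and the $F^{(r)}$'s are in place, the paper shows the cross-links are \emph{already} of $V_k$ type. Second, the key intermediate fact you are missing is a \emph{disjointness} statement. Fix one $V^{(i)}=:U_{k-1}$ with constituent $\K_j$'s called $L_0,\dots,L_{k-1}$. Press every other $\K_{k-1}$ together with a single $L_j$; since this lights $q\cdot2^{k-1}$ in $B$, your own argument forces exactly $2^{k-1}$ lit in each $F^{(r)}$, and a count shows the $2^{k-1}$ links from each pressed block land on pairwise disjoint target sets in $A$. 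Hence the targets of $U_{k-1}$ in $A$ are disjoint from the targets of every foreign $\K_{k-1}$. Now vary over all nonempty subsets of $\{L_0,\dots,L_{k-1}\}$ (still pressing the foreign $\K_{k-1}$'s): subtracting the constant foreign contribution, every such subset toggles the \emph{same} number $M_r$ of vertices in each $F^{(r)}$, with $\sum_r M_r=2^{k-1}$. Pick an $F^{(r)}$ with $M_r\ge1$ and apply \rf{L:switches} with $n=k$ switches: the divisibility clause forces $M_r$ to be a multiple of $2^{k-1}$, so $M_r=2^{k-1}$, which exhausts all of $U_{k-1}$'s links into $A$. Thus each $V_{k-1}$ is attached to a single $\K_k$, and uniqueness in \rf{L:switches} gives the $V_k$ structure.
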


\begin{proof}
We will construct a chain of restricted wirings, so let us write $W_k$ in
place of $W$ for our initial wiring, and we also write $N_k$ in place of
$n$. We assume without loss of generality that $W_k$ has the property that
no additional $\K_k$ subgraphs can be obtained by pivoting. We denote by $B_k$ the set
of all $N_k$ vertices.

We do a Partition by Degree argument, partitioning the $N_k$ vertices into
two sets: $A_k$, of cardinality $n_k$, contains all vertices in any $\K_k$,
and $B_{k-1}$, of cardinality $N_{k-1}$, contains all the other vertices. We
denote by $W_{k-1}$ the wiring $W_k$ restricted to $B_{k-1}$. Then,
$W_{k-1}\in A(N_{k-1},2^k-1)$, since otherwise we could create an extra
$\K_k$ by pivoting.

As usual, we have $n_k\le\mu(n,2^k)=q2^k$ and 
\begin{equation}\label{E:n=q(2m-1)}
\mu(N_{k-1},2^k-1)+{\frac{n_k}2} \le \mu(n,2^k) = q2^k\,. 
\end{equation}
The first inequality forces $n_k\le q2^k$, so $N_{k-1}\ge q(2^k-1)$. If
$N_{k-1}=q(2^k-1)$, we get equality in \rf{E:n=q(2m-1)}, but this inequality
cannot hold if $n_k<q2^k$, since it would force the inequality
$U(i2^k,2^{k-1})\le i2^{k-1}$ for some $i\in\N$, which itself can be reduced
to $U(i,2^{k-1})\le 0$, $i\in\N$, which we know to be false. Thus, the only
possible value for $(n_k,N_{k-1})$ is $(q2^k,q(2^k-1))$.

The fact that this choice of $(n_k,N_{k-1})$ only satisfies \rf{E:n=q(2m-1)}
with equality means that we can analyze the wiring more closely and rule out
any wiring that creates any slippage in the left-hand side bounds. In
particular, if there were a vertex in $W_{k-1}$ of degree $j>2^{k-1}$, we
could pivot about it relative to $A_k$ to get a $\hat K_j$. 
	Vertices in the $\hat K_j$
have at most $2^k-j$ links outside the $\hat K_j$, which must all be in $A_k$
because of the pivoting process).  By pressing a $\hat K_j$ vertex and then one
vertex in every $\K_k$, we light all the vertices in the $\hat K_j$ and all except
at most $2^k-j$ vertices in $A_k$, thus giving a contradiction since
$q2^k-(2^k-j)+j>q2^k$. A $\K_{k-1}$ also leads to a contradiction unless its
vertices link to exactly $2^{k-1}$ vertices in $A_k$.

Thus, $W_{k-1}\in A(q(2^k-1),2^{k-1})$ and \rf{E:n=q(2m-1)} forces
$M(W_{k-1},0) = \mu(q(2^k-1),2^{k-1})$. Thus, $W_{k-1}$ satisfies assumptions
similar to those of $W_k$, but with $k$ replaced by $k-1$. We can continue
this process, creating a chain of restricted wirings $W_j$ and associated
partition sets $A_j$ consisting of the vertices in $q$ copies of $\K_j$ and
$B_j$ of cardinality $N_{j-1}=q(2^j-1)$ such that $W_{j-1}:=W_{B_{j_1}}\in
A(N_{j-1},2^{j-1})$, for $j=0,\dots,k$.

Since all $\K_j$s are obtained by (partial or full) pivoting, all vertices in
any one $\K_j$ link to the same set of vertices. As in the discussion of
$V_k$ before this theorem, we may as well restrict to vertex press sets
where we are only allowed to press a single {\it designated vertex} in each
$\K_j$, and we say that $\K_j$ is {\it activated} if its designated vertex is
pressed. We also talk about a $\K_j$ being {\it switched} if its designated
vertex is one of the vertices given by a perturbation $y$ of an existing
vertex press set $x$, thus yielding a vertex press set $x+y$.

For $1\le j\le k$, we know that the designated vertex in any one $\K_{j-1}$
links to $2^{j-1}$ vertices in $A_j$. By activating every $\K_{j-1}$, and
then activating any $\K_j$s in which fewer than $2^{j-1}$ vertices are lit, we
could light strictly more than $q2^j=\mu(N_j,2^j)$ vertices in $A_j$ if
there were at least one $\K_j$ that had either strictly more, or strictly less,
than $2^{j-1}$ lit vertices after every $\K_{j-1}$ had been activated. It
follows that the links from any two different $K_{j-1}$s must be to distinct
sets of vertices in $A_j$, and that these links must be evenly distributed,
in the sense that there must be $2^{j-1}$ of them in each $\K_j$.

Suppose now that $1<j\le k$. By activating every $\K_{j-2}$ and every $\K_j$,
we light $q2^{j-2}$ vertices in $A_{j-2}$ and the same number in $A_{j-1}$,
and arguing as above we see that there must be exactly $2^{j-1}$ vertices
lit in each $\K_j$. We can continue this argument to deduce inductively that
any one vertex in $A_{j'}$ is linked to exactly $2^{j-1}$ vertices in $A_j$
if $j\ge j'$, and to no vertices in $A_j$ if $j<j'$. Furthermore there are
links to $2^{j-1}$ vertices in any given $\K_j$ from designated $\K_{j'}$
vertices whenever $j'<j$.

We next prove that all of these $\K_j$s are arranged in $V_k$s. This is
trivial if $k=1$, since each $\K_0$ has only one link to $A_1$, and each
$\K_1$ has a link from one of the $\K_0$s. Suppose inductively that all the
$A_j$s for $j\le k-1$ are arranged into $q$ copies of $V_{k-1}$. We wish to
prove the same with $k-1$ replaced by $k$.

We fix one particular $\K_{k-1}$, which we call $L_{k-1}$ and, for each $0\le
j< k-1$, denote by $L_j$ the copy of $\K_j$ that is linked to $L_{k-1}$. The
sets $L_j$, $0\le j\le k-1$ lie in a particular copy of $V_{k-1}$ that we
will call $U_{k-1}$. For each $0\le j< k-1$, let $x_j$ be the vertex press
sets where we activate every $\K_{k-1}$ other than $L_{k-1}$, and we also
activate $L_j$. By the properties of the $V_{k-1}$, this results in having
$2^{k-1}$ vertices lit in each $V_{k-1}$, and some vertices in $A_k$ are
also lit as a result of the $2^{k-1}$ links from each $\K_{k-1}$ and from
$L_j$ into $A_k$. By then activating any $\K_k$ where fewer than half of the
vertices are lit, we get at least $q(2^{k-1}+2^{k-1})=q2^k$ vertices lit in
$B_k$, the maximum amount allowed.

But we would get strictly more than this if the vertex press set $x_j$
resulted in any number of lit vertices other than $2^{k-1}$ in any $A_k$.
Thus, $x_j$ must result in $q2^{k-1}$ lit vertices in $A_k$, with exactly
$2^{k-1}$ of these in each $\K_k$. But there are only $2^{k-1}$ links from
each $\K_k$ or from $L_j$ to $A_k$, so it must be that no two of these links
are to the same vertex in $A_k$, since otherwise there would be fewer than
$q2^{k-1}$ vertices lit in $A_k$ as a result of $x_j$.

Consider more generally a vertex press set $x$ where we press the designated
vertex in $\K_{k-1}$ in all cases except $L_{k-1}$, and we also press the
designated vertex in one or more of the sets $L_j$, $0\le j\le k-1$. For all
such $x$, we get $2^{k-1}$ lit vertices in each $V_{k-1}$, so again we must
have $q2^{k-1}$ lit vertices in $A_k$, with exactly $2^{k-1}$ of these in
each $\K_k$. Since we have seen that the links to $A_k$ from $U_{k-1}$ are
disjoint from the links to $A_k$ from every $\K_{k-1}$ other than $L_{k-1}$,
it follows that any nontrivial combination of activations of the sets $L_j$,
$0\le j\le k-1$ toggles the same number of vertices in each $\K_k$ and
$2^{k-1}$ such vertices across the union all all $\K_k$s.

Denoting by $L_k$ some particular $\K_k$ where nontrivial combination of
activations of the sets $L_j$ toggle at least one vertex, we assume the
number of such toggles is $M$. Viewing our designated vertices in $L_j$ as
switches for $0\le j\le k-1$, we now apply \rf{L:switches} and the fact that
$1\le M\le 2^{k-1}$ to deduce that $M=2^{k-1}$. This uses up all the
available links from $U_{k-1}$ to $A_k$. Now $U_{k-1}$ is a fixed but
arbitrary $V_{k-1}$, so it follows that each $V_{k-1}$ is linked only to a
single $\K_k$, and so our full wiring consists of $q$ copies of $V_k$, as
required.
\end{proof}

\section{The cases $m=4,5$} \label{S:m=4}
\subsection{}
In this section, we prove \rf{T:m=4}. Throughout, an {$n$-optimal wiring} is
a wiring $W\in A(n,m)$ for which $M(W,0)=\mu(n,m)$; the parameter $m$ is in
all such cases understood.

\begin{proof}[Proof of \rf{T:m=4}]
Part (a) can be restated as $\mu(\cdot,p)=U(\cdot,p)$ for $p=4,5$. It is
readily verified from \rf{T:m=3}(a) that $\mu(\cdot,p)=U(\cdot,p)$ when
$p=3$, so it extends to $p=4,5$ by \rf{T:m=2^k+1}.

We now prove Part (b). The desired formula for $\mu^*(n,4)-4k$, $n=7k+i\ge
4$, is given by $a_i$ in the following table:

{\renewcommand{\arraystretch}{1.2}\renewcommand{\tabcolsep}{5mm}
\begin{center}
\begin{tabular}{c|c|c|c|c|c|c|c}
$i$   & 1 & 2 & 3 & 4 & 5 & 6 & 7 \\ \hline
$a_i$ & 2 & 2 & 2 & 4 & 4 & 4 & 4 
\end{tabular}
\end{center}
}

It is readily verified that $a_i$ equals the least even integer not less
than $\mu(7k+i,4)-4k$. Since pressing any vertex for a wiring in $A^*(n,4)$
preserves the parity of the number of lit vertices, $\mu(7k+i,4)$ must be
even. Thus $\mu^*(n,4)\ge 4k+a_i$.

We now prove the converse by induction. The nontrivial part is to prove it
for $4\le n\le 10$. Once this is proved, it follows inductively for all
$n=7k+i>10$ using \rf{E:sublinear}:
$$
\mu^*(7k+i,4)\le \mu^*(7(k-1)+i,4)+\mu^*(7,4) \le (4(k-1)+a_i)+4=4k+a_i\,.
$$

It remains to prove that $\mu^*(n,4)\le 4k+a_i$ when $4\le n\le 10$.
Trivially $\mu^*(4,4)=4$  and
\begin{alignat*}{3}
\mu^*(5,4) &\le \mu^*(4,3)+1 = 4   &&\qquad\text{(\rf{L:nn'm})}\,, \\
\mu^*(6,4) &\le 2\mu^*(3,2)  = 4   &&\qquad\text{(\rf{L:reflect})}\,, \\
\mu^*(7,4) &= 4                    &&\qquad\text{(\rf{T:m=2^k})}\,, \\
\mu^*(8,4) &\le \mu^*(7,3) + 1 = 6 &&\qquad\text{(\rf{L:nn'm})}\,, \\
\mu^*(9,4) &\le \mu^*(8,3) + 1 = 7 &&\qquad\text{(\rf{L:nn'm})}\,.
\end{alignat*}
All except the last of these is sharp, and parity considerations allow us to
improve the last one to the sharp $\mu^*(9,4)\le 6$.

Finally, $\mu^*(10,4)\le 6$ follows by consideration of the wiring
$$
W_{10} =
  \begin{pmatrix}
  1&0&0&0&0&0&0&0&0&0 \\ 1&1&1&0&0&0&0&0&0&0 \\ 0&1&1&0&0&0&0&0&0&0 \\
  1&0&0&1&1&1&0&0&0&0 \\ 0&1&1&1&1&1&0&0&0&0 \\ 0&0&0&1&1&1&0&0&0&0 \\
  1&0&0&0&0&0&1&1&1&1 \\ 0&1&1&0&0&0&1&1&1&1 \\ 0&0&0&1&1&1&1&1&1&1 \\
  0&0&0&0&0&0&1&1&1&1 \\
  \end{pmatrix}
$$
	(cf. Figure \ref{F:2-7}
\begin{figure}[h]
	\begin{center}
		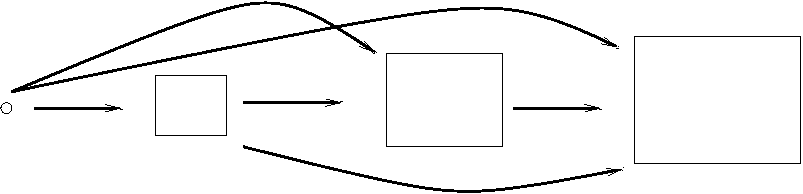
		\caption{$W_{10}$}\label{F:2-7}
	\end{center}
\end{figure}
All columns except columns 1, 2, 4, and 7 are duplicates of these columns,
so we can restrict ourselves to sets of vertex presses involving only these
four vertices. With this restriction, we can proceed to list all sixteen
possible values of $x$, and deduce that $M(W_{10},0)=6$.
\end{proof}

\subsection{} Let us mention an alternative, more instructive, way of proving that
$M(W_{10},0)\le 6$. Again, we may restrict ourselves to pressing only some
combination of vertices 1,2,4, and 7. Note first that $W_{10}$ consists of
one copy each of a $\hat K_1$, $\hat K_2$, $\hat K_3$, and $\hat K_4$ (vertices 1, 2--3, 4--6,
and 7--10, respectively), and $\hat K_i$ is connected to $\hat K_j$ only if $i<j$. The
subwiring for vertices 1--3 is such that we can never light all three
vertices (by parity, since all vertices have degree 2), and to get only one
unlit vertex, we must press vertex 1 and/or vertex 2. But all three of these
possibilities throws both the $\hat K_3$ and $\hat K_4$ out of sync since the links
from vertices 1 and 2 into the $\hat K_3$ are different from each other, and
similarly for the links into the $\hat K_4$. Furthermore, the $\hat K_3$ and $\hat K_4$
vertices remain out of sync regardless of whether we press vertices $4$,
$7$, or both. Thus, the unlit vertices always include either all of 1--3, or
at least one vertex each from 1--3, 4--6, and 7--10. Thus, $M(W_{10},0)\le 7$
and parity considerations improve this to $M(W_{10},0)\le 6$.

\subsection{Questions}
So far, we know this:
\begin{thm}\label{T:m<=5} For $m\in\N$, $m\le 5$, we have $\mu(\cdot,m) =
U(\cdot,m)$.
\end{thm}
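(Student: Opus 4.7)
The plan is to assemble Theorem~\ref{T:m<=5} from results already established rather than do any new combinatorial work. The cases $m=4,5$ are exactly the content of \rf{T:m=4}(a), which was obtained by invoking \rf{T:m=2^k+1} with $k=2$, once one knows that $\mu(\cdot,3)=U(\cdot,3)$. So the remaining task is to establish $\mu(\cdot,m)=U(\cdot,m)$ for $m=1,2,3$, after which the theorem follows.

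The case $m=1$ is trivial: the only admissible wiring in $A(n,1)$ is the identity, so $\mu(n,1)=n$, and from the definition $U(n,1)$ also equals $n$ (here $k=0$, $2^{k+1}-1=1$, so $n=q\cdot 1+0$ gives $q=n-1$, $r=1$, and $U(n,1)=q\cdot 1+a(1)=n$).

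For $m=2$, my plan is to unwind the definition of $U(n,2)$ and compare with \rf{T:m=2}(a). With $m=2$ we have $k=1$, so $2^{k+1}-1=3$; writing $n=3q+r$ (using the convention $r=3$, $q=(n-3)/3$ when $3\mid n$) we get
\[
U(n,2)=2q+a(r),\qquad a(1)=1,\ a(2)=a(3)=2.
\]
Checking the three residue classes of $n$ mod $3$ shows $U(n,2)=\lce 2n/3\rce$ in each case, which by \rf{T:m=2}(a) coincides with $\mu(n,2)$. For $m=3$, the value of $U$ is unchanged within the dyadic block $2\le m<4$, so $U(n,3)=U(n,2)$; combined with \rf{T:m=3}(a), which gives $\mu(n,3)=\mu(n,2)$, we obtain $\mu(n,3)=U(n,3)$.

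This establishes $\mu(\cdot,m)=U(\cdot,m)$ for $m=1,2,3$, and then \rf{T:m=4}(a) (equivalently, the $k=2$ instance of \rf{T:m=2^k+1}) supplies the cases $m=4,5$. There is no real obstacle here; the only tedium is the three-case verification that $U(n,2)=\lce 2n/3\rce$, and the main content of the theorem has in effect already been absorbed into the earlier sections.
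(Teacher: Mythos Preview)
Your proposal is correct and matches the paper's approach: the paper states \rf{T:m<=5} as a summary with no separate proof, having already noted in the proof of \rf{T:m=4}(a) that $\mu(\cdot,3)=U(\cdot,3)$ follows from \rf{T:m=3}(a) and that \rf{T:m=2^k+1} then yields $p=4,5$. Your explicit verification of the easy cases $m=1,2$ is a harmless addition that the paper leaves implicit.
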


This naturally prompts the following question, which we cannot
answer.

\begin{que}\label{Q:general m}
Is it true that $\mu(\cdot,m)=U(\cdot,m)$ when $m>5$?
\end{que}

\rf{T:m=2^k} states that if $m$ is a power of $2$, then there exists wirings
for $(n,m)=(2m-1,m)$ that are optimal in the sense that $\mu^*(n,m)$ and
$\mu(n,m)$ both equal to $(n+1)/2$ (the smallest possible value for
$\mu(n,m)$ according to \rf{L:mean}). This is evidence that powers of $2$
are significant boundaries for the behavior of $m\mapsto\mu(\cdot,m)$ and
$m\mapsto\mu^*(\cdot,m)$. This fact motivates the following pair of open
questions with which we close the article. Note that the first one is simply a
weaker version of \rf{Q:general m}.

\begin{que}
Is it true that $\mu(n,m)$ is independent of $m$ for all $2^k\le m\le
2^{k+1}-1$, $k\in\N$?
\end{que}

The answer to the above question is affirmative if we restrict to $m\le 5$.

\begin{que}
Is it true that $\mu(n,m_1)-\mu^*(n,m_2)$ is bounded independent of
$n,k\in\N$ for all $2^k\le m_1,m_2\le 2^{k+1}-1$, $k\in\N$?
\end{que}

The answer to this last question is affirmative if we restrict to $m_1,m_2\le 4$.


\end{document}